\newtheorem{theorem}{Theorem}[section]
\newtheorem{lemma}[theorem]{Lemma}
\newtheorem{proposition}[theorem]{Proposition}
\newtheorem{corollary}[theorem]{Corollary}
\theoremstyle{remark}
\newtheorem{remark}[theorem]{Remark}
\newtheorem{example}[theorem]{Example}
\newtheorem{question}[theorem]{Question}
\newcommand{\id}{\text{id}}
\newcommand{\lra}{\longrightarrow}
\newcommand{\sm}{\setminus}
\newcommand{\wt}[1]{\widetilde{#1}}
\newcommand{\Ob}{\operatorname{Ob}}
\def\ZZ{\mathbb{Z}}
\def\QQ{\mathbb{Q}}
\def\NN{\mathbb{N}}
\def\RR{\mathbb{R}}
\begin{document}

\title[Left-orderability from circular-orderability]{Promoting circular-orderability to left-orderability}

\date{\today}

\author[Jason Bell]{Jason Bell}
\thanks{Jason Bell was partially supported by NSERC grant RGPIN-2016-03632.}
\address{Department of Pure Mathematics\\
University of Waterloo \\
Waterloo \\
ON Canada N2L 3G1} \email{jpbell@uwaterloo.ca}
\urladdr{http://www.math.uwaterloo.ca/~jpbell/} 

\author[Adam Clay]{Adam Clay}
\thanks{Adam Clay was partially supported by NSERC grant RGPIN-2014-05465}
\address{Department of Mathematics\\
University of Manitoba \\
Winnipeg \\
MB Canada R3T 2N2} \email{Adam.Clay@umanitoba.ca}
\urladdr{http://server.math.umanitoba.ca/~claya/} 

\author[Tyrone Ghaswala]{Tyrone Ghaswala}
\thanks{Tyrone Ghaswala was partially supported by a PIMS Postdoctoral Fellowship at the University of Manitoba}
\address{Department of Mathematics\\
University of Manitoba \\
Winnipeg \\
MB Canada R3T 2N2} \email{ty.ghaswala@gmail.com}
\urladdr{https://server.math.umanitoba.ca/~ghaswalt/} 

\begin{abstract}
Motivated by recent activity in low-dimensional topology, we provide a new criterion for left-orderability of a group under the assumption that the group is circularly-orderable: A group $G$ is left-orderable if and only if $G \times \mathbb{Z}/n\mathbb{Z}$ is circularly-orderable for all $n>1$.  This implies that every circularly-orderable group which is not left-orderable gives rise to a collection of positive integers that exactly encode the obstruction to left-orderability, which we call the obstruction spectrum.  We precisely describe the behaviour of the obstruction spectrum with respect to torsion, and show that this same behaviour can be mirrored by torsion-free groups, whose obstruction spectra are in general more complex.
\end{abstract}

\maketitle

\section{Introduction}

A group $G$ is left-orderable if there exists a strict total ordering $<$ of its elements such that $g<h$ implies $fg<fh$ for all $f, g, h \in G$, and bi-orderable if $g<h$ implies $gf<hf$ also holds.   Equivalently, a group $G$ is left-orderable if there is a subset $P \subset G$, called the \emph{positive cone} of $G$, satisfying $P \cdot P \subset P$ and $P \sqcup P^{-1} = G \setminus \{id \}$.  For bi-orderability, one also requires $gPg^{-1} \subset P$ for all $g \in G$.

Various criteria exist that allow one to determine when $G$ is left-orderable or bi-orderable.  Notable examples are the Burns-Hale theorem \cite{BH72}, various semigroup conditions \cite{Conrad59, Los54, Ohnishi52}, or the existence of embeddings into left-orderable groups like $\mathrm{Homeo}_+(\mathbb{R})$ \cite{Conrad59}.  Still other criteria exist that describe when it is possible to use weaker structures on $G$ (such as locally invariant orderings or circular orderings \cite{LM12, CD03}) to assert the existence of a left ordering of $G$ by imposing certain algebraic assumptions.

This note presents a condition of the latter kind.  Specifically, we study the relationship between circular-orderability and left-orderability of a group, and determine necessary and sufficient conditions that a circularly-orderable group be left-orderable.  Results along these lines already exist in the literature (see Section \ref{background}), but all require the existence of a special type of circular ordering, from which one derives that the group at hand is in fact left-orderable.  In contrast, we arrive at left-orderability of a circularly-orderable group by imposing algebraic conditions on the group itself. We prove:

\begin{theorem}
\label{main theorem}
A group $G$ is left-orderable if and only if $G \times \ZZ/n \ZZ$ is circularly-orderable for all $n \geq 2$.
\end{theorem}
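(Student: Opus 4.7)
For the forward direction, the plan is to combine a left-ordering of $G$ with the natural circular ordering on $\ZZ/n\ZZ$ (viewed as the $n$-th roots of unity in $S^1$) via a lexicographic-type construction: a triple in $G \times \ZZ/n\ZZ$ is declared to be in positive cyclic order by first inspecting the $\ZZ/n\ZZ$-coordinates cyclically, and breaking ties using the left-ordering on $G$. Verifying left-invariance and the cocycle condition is routine, and realizes the familiar principle that the direct product of a left-orderable group and a circularly-orderable group is circularly-orderable.

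For the backward direction, assume $G \times \ZZ/n\ZZ$ is circularly-orderable for every $n \geq 2$. By the correspondence (see Section \ref{background}) between circular orderings and left-orderable central $\ZZ$-extensions, for each $n$ we obtain a central short exact sequence
\[
1 \to \langle z_n \rangle \to E_n \to G \times \ZZ/n\ZZ \to 1
\]
with $E_n$ left-orderable. First, the preimage $H_n \leq E_n$ of $\{1\} \times \ZZ/n\ZZ$ is an extension of $\ZZ/n\ZZ$ by $\ZZ$ sitting inside the torsion-free group $E_n$, forcing $H_n \cong \ZZ$; fix a generator $\wt{z}_n$ of $H_n$, so that $\wt{z}_n^n = z_n^{\pm 1}$. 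Second, $\wt{z}_n$ is central in $E_n$: for any $g \in E_n$, both $\wt{z}_n$ and $g\wt{z}_n g^{-1}$ are $n$-th roots of the central element $\wt{z}_n^n$, and left-orderable groups admit unique $n$-th roots. Consequently, $E_n / \langle \wt{z}_n \rangle \cong G$, producing a left-orderable central extension $1 \to \ZZ \to E_n \to G \to 1$ for every $n \geq 2$.

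To extract the cohomological content, choose a set-theoretic section $s \colon G \to E_n$ whose image lies in the preimage of $G \times \{0\}$. The resulting 2-cocycle then takes values in $\langle z_n \rangle = \langle \wt{z}_n^n \rangle$, so the Euler class of $E_n \to G$ in $H^2(G;\ZZ)$ (with $\langle \wt{z}_n \rangle$ identified with $\ZZ$ via $\wt{z}_n \mapsto 1$) lies in $n \cdot H^2(G;\ZZ)$. Thus, for every $n \geq 2$, $G$ admits a left-orderable central $\ZZ$-extension whose Euler class is $n$-divisible. Since $G$ is left-orderable precisely when some left-orderable central $\ZZ$-extension has trivial Euler class (the split extension $G \times \ZZ$), the task reduces to promoting this divisibility-for-all-$n$ data to an actual extension of Euler class zero.

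The main obstacle is this final synthesis. Having distinct left-orderable extensions with $n$-divisible Euler classes for each $n$ does not, on its face, yield a single extension of Euler class zero, so this step seems to require either a compactness/limiting argument on the spaces of left-orderings of the $E_n$, or a contrapositive combinatorial attack: assuming $G$ is not left-orderable, produce an explicit finite obstruction set in $G$ and lift it to a finite subset of $G \times \ZZ/n\ZZ$ that obstructs circular-orderability for a specific $n$. The case where $G$ has torsion is immediate (if $g \in G$ has order $k>1$ then $(\ZZ/k\ZZ)^2 \leq G \times \ZZ/k\ZZ$ is non-cyclic, hence not circularly-orderable), so the essential difficulty lies in the torsion-free, non-left-orderable case.
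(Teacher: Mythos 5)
Your forward direction matches the paper's (the lexicographic construction of Section \ref{lex-orderings}), and your analysis of the backward direction is accurate as far as it goes: the preimage of $\{1\}\times\ZZ/n\ZZ$ in the unwrapped group is indeed an infinite cyclic central subgroup, and the resulting extension of $G$ has $n$-divisible Euler class. But you have correctly identified that this is where your argument stops, and the missing synthesis step is the entire content of the theorem: knowing that for each $n$ there is \emph{some} left-orderable central $\ZZ$-extension of $G$ with $n$-divisible Euler class does not produce one with trivial class, and neither of the two rescue strategies you sketch (a limiting argument over spaces of left-orderings, or a finite combinatorial obstruction) is what works. So this is a genuine gap, not a routine verification left to the reader.

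The idea you are missing is to assemble all the cyclic groups at once \emph{before} unwrapping, by passing to $G\times\QQ/\ZZ$. Since circular-orderability is a local property (a group is circularly-orderable iff all its finitely generated subgroups are), and every finitely generated subgroup of $G\times\QQ/\ZZ$ embeds in some $G\times\ZZ/n\ZZ$, your hypothesis implies $G\times\QQ/\ZZ$ is circularly-orderable; this is exactly how the paper reduces Theorem \ref{main theorem} to Corollary \ref{LO criterion}. One then unwraps \emph{once}: in the left-orderable central extension $\widetilde{G\times\QQ/\ZZ}$, the preimage $R$ of $\QQ/\ZZ$ is a torsion-free locally cyclic group, hence isomorphic to $\QQ$ (Proposition \ref{divisible lift} --- the divisible analogue of your computation $H_n\cong\ZZ$). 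Now Tararin's theorem (Theorem \ref{tararin's theorem}) states that a normal subgroup isomorphic to $\QQ$ in a left-orderable group is relatively convex, so the quotient $\widetilde{G\times\QQ/\ZZ}/R\cong G$ is left-orderable. The reason $\QQ$ succeeds where $\ZZ$ fails is precisely the divisibility you were trying to exploit cohomologically: a cofinal central copy of $\QQ$ forces, via the infinitesimal subgroup of Lemma \ref{normal-infinitesimals}, a direct-factor splitting that a copy of $\ZZ$ does not. Proving Tararin's theorem (via relatively convex subgroups, cofinality, and infinitesimals) is the real work of the paper's Section \ref{main theorem section}, and none of it appears in your proposal.
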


In light of this result, we define the \textit{obstruction spectrum} of a group to be the set of all $n \in \mathbb{N}_{>1}$ for which $G \times \ZZ/n \ZZ$ is not circularly-orderable.  As one might expect, the obstruction spectrum detects torsion---in the sense that every group containing torsion has nonempty obstruction spectrum, and for any group with torsion certain elements in its obstruction spectrum can be attributed to its torsion.  However the obstruction spectrum of a group does not detect torsion alone, as there are plenty of torsion-free circularly-orderable groups which are not left-orderable and thus have nonempty obstruction spectra. 

We therefore provide tools for computing the obstruction spectra of certain torsion-free examples: finitely generated amenable groups (Proposition \ref{daves reduction}), and free products (Proposition \ref{obstruction_free_products}).   Our computations show that: (1) the elements of any obstruction spectrum arising from torsion in the group can also be realized as the obstruction spectrum of a torsion-free group, and (2) the result of Theorem \ref{main theorem} is sharp, in the sense that for every $N \in \mathbb{N}$ there is a group $G$ such that the groups $\{G \times \ZZ/n \ZZ \mid n < N \}$ are all circularly-orderable, yet the obstruction spectrum of $G$ is nonempty.

Our motivation for studying circular-orderability and its relationship with left-orderability is two fold. First, for countable groups, the conditions of being left-orderable and circularly-orderable are equivalent to admitting injections into $\operatorname{Homeo}_+(\RR)$ and $\operatorname{Homeo}_+(S^1)$ respectively (see \cite[Theorem 2.2.14]{Calegari} for a proof).  Therefore, these combinatorial conditions completely characterize when countable groups act faithfully by orientation-preserving homeomorphisms on 1-manifolds. It is natural to ask what conditions we can put on a group that acts faithfully on $S^1$ to guarantee that it acts faithfully on $\RR$, or equivalently, to guarantee the existence of a faithful action on $S^1$ with a global fixed point. 

The other motivation comes from low-dimensional topology, where there has been substantial activity surrounding the so-called ``L-space conjecture".  For a given irreducible rational homology $3$-sphere $M$, this conjecture relates the properties of $\pi_1(M)$ being left-orderable to analytic (Heegaard-Floer theoretic) and topological (the existence of certain nice foliations) properties of $M$ \cite{BGW13, Ju15}.

At present, almost all approaches to left ordering $\pi_1(M)$ involve first finding a faithful action of $\pi_1(M)$ on $S^1$, either via representations of $\pi_1(M)$ into $\mathrm{PSL}(2, \mathbb{R})$ (which in turn acts on $S^1$), or by creating a co-orientable, taut foliation of $M$ and then applying Thurston's universal circle construction to arrive at $\rho : \pi_1(M) \rightarrow \mathrm{Homeo}_+(S^1)$.  For examples of these approaches, see \cite{CD18, BH18}.  In either case the construction of the representation or foliation must be done with great care, so as to guarantee that the corresponding action on $S^1$ has trivial Euler class---thus guaranteeing that $\pi_1(M)$ is left-orderable (cf. Corollary \ref{euler corollary}).

Our result provides an alternative approach to this problem.  One could instead take care to create a family of faithful representations of $\pi_1(M)$ or a family of co-orientable taut foliations of $M$ that depend on a parameter $n \in \mathbb{N}_{>1}$, each yielding a faithful action of $\pi_1(M) \times \ZZ/n\ZZ$ on $S^1$, and pay no heed to the Euler class of these actions. Such a family of actions, together with Theorem \ref{main theorem}, will also yield left-orderability of $\pi_1(M)$.

\subsection{Organization.} The paper is organized as follows.   In Section \ref{background} we review the classical arguments that connect left-orderability of a group to the existence of a circular ordering having certain cohomological properties.  Section \ref{main theorem section} contains the proof of Theorem \ref{main theorem}.  Last, Section \ref{obstruction section} introduces and studies the obstruction spectrum of a group, including the examples which show Theorem \ref{main theorem} is sharp.

\subsection*{Acknowledgements.}  We thank Dave Morris for his helpful conversations, in particular for his help in generalizing the arguments of Section \ref{obstruction section}, and for supplying the statement and proof of Theorem \ref{theorem-of-dave}. We would also like to thank Kathryn Mann for asking whether there existed torsion-free circularly-orderable groups that are not left-orderable, providing the impetus for this project, and Steve Boyer for drawing our attention to the construction mentioned in Remark \ref{not-good-enough} and Example \ref{knot_example}. We are grateful to Sanghoon Kwak for comments on an earlier draft, and to the referee for comments and suggestions to improve the paper.

%%%%%%%%%%%%%%%%%%%
%Good up to here%
%%%%%%%%%%%%%%%%%%%

\section{Background}
\label{background}

A group $G$ is \textit{circularly-orderable} if there exists a function $c:G^3 \rightarrow \{ 0, \pm 1\}$ satisfying:
\begin{enumerate}
\item $c^{-1}(0) = \{ (g_1, g_2, g_3) \mid g_i = g_j \mbox{ for some $i \neq j$ } \}$.
\item $c$ satisfies a cocycle condition, meaning
\[c(g_2, g_3, g_4) - c(g_1, g_3, g_4) + c(g_1, g_2, g_4) - c(g_1, g_2, g_3) = 0
\]
for all $g_1, g_2, g_3, g_4 \in G$.

\item $c(g_1, g_2, g_3) = c(hg_1, hg_2, hg_3)$ for all $h, g_1, g_2, g_3 \in G$.
\end{enumerate}
A function $c:G^3 \rightarrow \{0, \pm 1\}$ satisfying the conditions above will be called a \textit{circular ordering} of $G$.  When $G$ comes equipped with a circular ordering $c$, the pair $(G, c)$ will be called a \textit{circularly-ordered group}.  Note that unless otherwise specified, all circular orderings in this note are assumed to be left-invariant.

Promoting $(G,c)$ from a circularly-ordered to a left-ordered (or left-orderable) group has often focused on conditions on the circular ordering $c$ which will guarantee left-orderability of $G$. The ideas are classical, and proceed as follows.

From every circularly-ordered group $(G, c)$ we can construct a group $\widetilde{G}_c$ via a type of ``unwrapping" construction that produces a left-ordered cyclic central extension \cite{zheleva76}.  The group $\widetilde{G}_c$ is the set $\mathbb{Z} \times G$ equipped with the operation $(n,a)(m,b)=(n+m+f_c(a,b),ab)$, where
\[
f_c(a,b)=\left\{\begin{array}{cl} 0 & \text{if $a=id$ or $b=id$ or $c( id, a, ab)=1$}
\\ 1 &  \text{if $ab=id$ $(a\not=id)$ or $c(id,ab,a)=1$.  }  \end{array}
\right.
\] 
Setting $P=\{(n,a)\mid n\geq 0\} \setminus \{ (0, id) \},$ it is not difficult to check that $P$ is the positive cone of a left ordering $<_c$ of $\widetilde{G}_c$.   

The construction of $\widetilde{G}_c$ is a special case of a classical result:  The function $f_c(a,b)$ is in fact an inhomogeneous $2$-cocycle, and so it defines an element $[f_c] \in H^2(G; \mathbb{Z})$.  The construction above is then the well-known correspondence between elements of $H^2(G; \mathbb{Z})$ and equivalence classes of central extensions 
\[ 1 \rightarrow \mathbb{Z} \rightarrow \widetilde{G} \rightarrow G \rightarrow 1,
\]
applied to the cocycle $f_c$.
Since this correspondence carries the identity in $H^2(G; \mathbb{Z})$ to a split extension, we immediately have the following.

\begin{corollary}
\label{euler corollary}
If $(G, c)$ is a circularly-ordered group and $[f_c]=id \in H^2(G; \mathbb{Z})$, then $G$ is left-orderable.
\end{corollary}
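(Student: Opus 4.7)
The plan is to derive a left ordering on $G$ by embedding $G$ into the already-constructed group $\wt{G}_c$. The argument has three parts: confirm that $\wt{G}_c$ is left-ordered by $P$, use the hypothesis to split the extension, and restrict the ordering to $G$.

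For the first part, closure of $P$ under the group operation is immediate from the fact that $f_c$ takes nonnegative values, so it only remains to verify the trichotomy $P \sqcup P^{-1} = \wt{G}_c \setminus \{(0, \id)\}$. A short calculation of inverses yields $(n, a)^{-1} = (-n-1, a^{-1})$ when $a \ne \id$ and $(n, \id)^{-1} = (-n, \id)$; in each case exactly one of $(n,a)$ and $(n,a)^{-1}$ has nonnegative first coordinate and is distinct from $(0, \id)$, so $P$ is indeed a positive cone and $\wt{G}_c$ is left-ordered.

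For the second part, invoke the classical correspondence between $H^2(G;\ZZ)$ and equivalence classes of central extensions of $G$ by $\ZZ$: the identity cohomology class corresponds precisely to split extensions. Since the extension $1 \to \ZZ \to \wt{G}_c \to G \to 1$ is the one classified by $f_c$ and $[f_c] = \id$ by hypothesis, there exists a homomorphic section $s: G \to \wt{G}_c$. The image $s(G)$ is then a subgroup of a left-ordered group, and pulling the positive cone $P$ back along $s$ yields a left ordering of $G$.

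There is no substantive obstacle here: the real work was already done in building $\wt{G}_c$ and identifying it as the central extension classified by the cocycle $f_c$. The corollary is a formal consequence of that identification together with the standard splitting criterion for central extensions whose cohomology class vanishes, combined with the observation that left-orderability passes to subgroups.
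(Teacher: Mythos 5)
Your argument is correct and follows essentially the same route as the paper: the paper likewise observes that $[f_c]=\id$ forces the central extension $1\to\ZZ\to\wt{G}_c\to G\to 1$ to split, so that $\wt{G}_c\cong\ZZ\times G$ and $G$ inherits a left ordering as a subgroup of the left-ordered group $\wt{G}_c$. You merely make explicit the verification that $P$ is a positive cone (which the paper asserts as ``not difficult to check'') and phrase the splitting in terms of a section rather than an isomorphism with $\ZZ\times G$; these are cosmetic differences.
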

\begin{proof}
If $[f_c] =id$ then the left-ordered cyclic central extension $\widetilde{G}_c$ is isomorphic to $\ZZ \times G$, so $G$ is left-orderable.
\end{proof}

\subsection{Secret left orderings}
In Corollary \ref{euler corollary} it is important to note that $c$ itself does not provide any hint as to what the left ordering of $G$ might be.  There are cases, however, where the provided circular ordering does carry such information, and again this behaviour can be detected using a cohomological argument.  

First we note that every left-ordered group $(G, <)$ provides an example of a circularly-ordered group, by declaring $c(g_1, g_2, g_3) = 1$ whenever $g_1 <g_2<g_3$, up to cyclic permutation.  The circular orderings of a group that arise this way will be called \textit{secret left orderings}, and obviously carry all the information needed to define a left ordering of $G$.  Such circular orderings are detected by the second bounded cohomology group $H^2_b(G;\ZZ)$.

\begin{proposition} \label{bounded-characterization}
Let $(G,c)$ be a circularly-ordered group.  The circular ordering $c$ is a secret left ordering if and only if $[f_c] = id \in H^2_b(G;\ZZ)$.
\end{proposition}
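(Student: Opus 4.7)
The plan is to translate between bounded coboundings of $f_c$ and left orderings of $G$ inducing $c$. Recall that $[f_c]=0$ in $H^2_b(G;\ZZ)$ means $f_c=\delta\chi$ for some bounded $\chi:G\to\ZZ$, where $\delta\chi(a,b):=\chi(a)+\chi(b)-\chi(ab)$; substituting $a=b=id$ forces $\chi(id)=0$.

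For the forward direction, suppose $c=c_<$ for a left ordering $<$ of $G$ with positive cone $P$. I would define the indicator $\chi:G\to\{0,1\}$ by $\chi(g)=0$ for $g\in P\cup\{id\}$ and $\chi(g)=1$ for $g\in P^{-1}$. A case analysis, partitioning on which of $\{id\}$, $P$, $P^{-1}$ contains each of $a$, $b$, $ab$ and comparing the defining formula for $f_c$ with the cyclic-order description of $c_<(id,a,ab)$, shows $\delta\chi(a,b)=f_c(a,b)$ in every case. Since $\chi$ is $\{0,1\}$-valued it is bounded, giving $[f_c]=0$.

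For the reverse direction, let $\chi:G\to\ZZ$ be bounded with $\delta\chi=f_c$, so $\chi(id)=0$. From $f_c(g,g^{-1})=1$ for $g\ne id$ one reads off $\chi(g)+\chi(g^{-1})=1$. The step I expect will take the most care is promoting boundedness to the stronger claim that $\chi$ actually takes values in $\{0,1\}$: if $\chi(a)\ge 2$ for some $a\in G$, iterating $\delta\chi(a,a^k)\in\{0,1\}$ gives $\chi(a^{k+1})\ge\chi(a^k)+1$, hence $\chi(a^n)\ge n+1$; this contradicts boundedness when $a$ has infinite order and contradicts $\chi(a^n)=0$ when $a$ has finite order $n$. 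The symmetric argument rules out $\chi(a)\le -1$.

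With $\chi$ now known to be $\{0,1\}$-valued, I would set $P:=\chi^{-1}(0)\sm\{id\}$ and verify that $P$ is the positive cone of a left ordering $<$ of $G$: the identity $\chi(g)+\chi(g^{-1})=1$ yields $P\sqcup P^{-1}=G\sm\{id\}$, while for $a,b\in P$ the relation $\chi(ab)=\chi(a)+\chi(b)-f_c(a,b)=-f_c(a,b)\in\{-1,0\}$, combined with $\chi\ge 0$, forces $\chi(ab)=0$ and $ab\ne id$, so $ab\in P$. By construction $\chi$ agrees with the indicator built from $<$ in the forward direction, so applying that direction to $c_<$ gives $f_{c_<}=\delta\chi=f_c$. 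Since the defining formula for $f_c$ shows that $c$ is determined by $f_c$ through $c(id,x,y)=1\Leftrightarrow f_c(x,x^{-1}y)=0$ on distinct nontrivial triples together with left invariance, this forces $c=c_<$, exhibiting $c$ as a secret left ordering.
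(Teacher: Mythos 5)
Your proposal is correct and follows essentially the same route as the paper's proof: the same indicator-of-the-negative-cone coboundary in the forward direction, the same iteration of $\delta\chi(a,a^k)\in\{0,1\}$ to force a bounded cobounding function into $\{0,1\}$, the same construction of the positive cone from $\chi^{-1}(0)$, and the same recovery of $c$ from $f_c$ via left invariance.
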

\begin{proof}
Suppose $c$ is a secret left ordering.  Let $P$ be the positive cone of the secret left ordering $<$ corresponding to $c$.  Define a bounded function $d:G \to \ZZ$ by 
\[
d(g) = \begin{cases}
0 & \text{if } g \in P \cup \{id\} \\
1 & \text{if } g \in P^{-1}.
\end{cases}
\]
Then for all $g,h \in G$, $f_c(g,h) = d(g) - d(gh) + d(h)$.  Therefore $[f_c] = id \in H^2_b(G;\ZZ)$.

Conversely, suppose that for all $g,h \in G$, $f_c(g,h) = d(g) - d(gh) + d(h)$ for some bounded function $d:G \to \ZZ$, and recall $f_c(g,h) \in \{0,1\}$.  We first show $d(g) \in \{0,1\}$ for all $g \in G$.

Suppose $d(g) < 0$ for some $g \in G$.  Then $d(g) + d(g^{k-1}) - d(g^k) = f_c(g,g^{k-1}) \geq 0$ and via induction on $k$ we may conclude that $d(g^k) \leq -k$ for all $k \in \NN$, contradicting the boundedness of $d$. Similarly, if $d(g) \geq 2$, induction on $k$ with the fact that $d(g) + d(g^{k-1}) - d(g^k) \leq 1$ allows us to conclude $d(g^k) \geq 1 + k$ for all $k \in \NN$.  Therefore $d(g) \in \{0,1\}$ for all $g \in G$.

Now define $P \subset G$ by $g \in P \cup \{id\}$ if and only if $d(g) = 0$.  We check that $P$ is a positive cone.  Given $g,h \in P$, note $d(g) + d(h) - d(gh) \in \{0,1\}$.  Therefore we must have $d(gh) = 0$ so $P\cdot P \subset P$.  Let $g \in G \sm\{id\}$. If $d(g) = 0$, then $d(g) + d(g^{-1}) = f_c(g,g^{-1}) = 1$.  Therefore $d(g^{-1}) = 1$.  Similarly if $d(g) = 1$, $d(g^{-1}) = 0$.  Therefore $G = P \sqcup \{id\} \sqcup P^{-1}$ and $P$ is a positive cone on $G$.

Let $<$ be the left ordering on $G$ with positive cone $P$.  It suffices to show $g_1<g_2<g_3$ implies $c(g_1,g_2,g_3) = 1$.  If $g_1<g_2<g_3$, then $d(g_1^{-1}g_2) = d(g_1^{-1}g_3) = d(g_2^{-1}g_3) = 0$.  We can recover $c$ from $f_c$ via the prescription
\[
c(g_1,g_2,g_3) = \begin{cases}
0 &\text{if } g_i = g_j \text{ for some }i \neq j \\
1 - 2f_c(g_1^{-1}g_2,g_2^{-1}g_3) &\text{otherwise.}
\end{cases}
\]
Since $g_1,g_2,$ and $g_3$ are distinct we have
\[
c(g_1,g_2,g_3) = 1 - 2f_c(g_1^{-1}g_2,g_2^{-1}g_3) = 1 - 2(d(g_1^{-1}g_2) + d(g_2^{-1}g_3) - d(g_1^{-1}g_3)) = 1
\]
completing the proof.
\end{proof}

\begin{remark}\label{not-good-enough}
The astute reader will notice that if $(G,c)$ is a circularly-ordered group, then $c$ is a bounded homogeneous 2-cocycle. In fact, $[c] = -2[f_c] \in H^2_b(G;\ZZ)$ (and similarly in $H^2(G;\ZZ)$). However, it is worth pointing out that Corollary \ref{euler corollary} and Proposition \ref{bounded-characterization} do not hold if we replace $[f_c]$ with $[c]$. Indeed, the only circular ordering $c$ on $\ZZ/2\ZZ$ is such that $[c] = 0$ in $H^2_b(\ZZ/2\ZZ;\ZZ)$, and therefore in $H^2(\ZZ/2\ZZ;\ZZ)$, but $\ZZ/2\ZZ$ is of course not left-orderable. More generally, if $G$ is a circularly-orderable and not left-orderable group with an index 2 left-orderable subgroup $H$, one can construct a circular ordering $c$ on $G$ such that $[c] = 0$ in $H^2_b(G;\mathbb Z)$.
\end{remark}

\subsection{Lexicographic orderings} \label{lex-orderings}
The following construction intertwines left-orderable and circularly-orderable groups, and is a standard construction (see \cite[Lemma 2.2.12]{Calegari06}).

Suppose 
\[
1 \lra K \lra G\overset{\phi}{\lra} H \lra 1
\]
is a short exact sequence such that $c_H$ is a circular ordering on $H$ and $<$ is a left-ordering on $K$. Let $c_<$ be the corresponding secret left ordering of $K$, that is, $c_<(g_1,g_2,g_3) = 1$ whenever $g_1 < g_2 <g_3$. Define the \textit{lexicographic circular ordering} $c$ on $G$ by
\[
c(g_1,g_2,g_3) = \begin{cases}
c_H(\phi(g_1),\phi(g_2),\phi(g_3)) &\text{if $\phi(g_1),\phi(g_2)$, and $\phi(g_3)$ are all distinct},\\
c_<(g_2^{-1}g_1,id,g_1^{-1}g_2) &\text{if $\phi(g_1) = \phi(g_2) \neq \phi(g_3)$},\\
c_<(g_1^{-1}g_3,id,g_1^{-1}g_2) &\text{if $\phi(g_1) = \phi(g_2) = \phi(g_3)$}.
\end{cases}
\]
Note that circular orderings are invariant under cyclic permutations of the arguments, so the information provided uniquely determines the lexicographic circular ordering.

\section{Promoting circularly-orderable groups}
\label{main theorem section}

Our approach to the problem of promoting the circularly-ordered group $(G, c)$ to a left-orderable group will be to impose conditions on the group $G$, as opposed to conditions on the circular ordering $c$. One obvious obstruction to left-orderability of a circularly-ordered group $G$ is that the group may admit torsion, and so as a first attempt one might ask if removing this obstruction is sufficient to obtain left-orderability.  

If $c$ happens to be a bi-invariant circular ordering (meaning that $c(g_1, g_2, g_3) = c(g_1h, g_2h, g_3h)$ as well as $c(g_1, g_2, g_3) = c(hg_1, hg_2, hg_3)$ for all $h, g_1, g_2, g_3 \in G$), then it turns out that this is enough to guarantee that $G$ is bi-orderable.  To prove this we employ a theorem of \'{S}wierczkowski:

\begin{theorem}
\label{sw theorem}
 \cite{Sw59} If $G$ admits a bi-invariant circular ordering then there is a bi-ordered group $H$ and an order-embedding $i: G \rightarrow H \times S^1$, where $S^1$ is equipped with its natural circular ordering and $H \times S^1$ is ordered lexicographically.
\end{theorem}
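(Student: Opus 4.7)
The plan is to build on the unwrapping construction $\widetilde{G}_c$ from Section~\ref{background} in three stages: upgrade its left-ordering to a bi-ordering, extract a rotation-number homomorphism $G \to S^1$, and assemble the embedding.

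First I would check that bi-invariance of $c$ makes the canonical positive cone $P = \{(n, a) : n \geq 0\} \setminus \{(0, \id)\}$ of $\widetilde{G}_c$ conjugation-invariant. A direct calculation shows that the $\ZZ$-coordinate of $(c, d)(n, a)(c, d)^{-1}$ equals $n + f_c(d, a) - f_c(d, d^{-1}) + f_c(da, d^{-1})$; since $f_c(d, d^{-1}) = 1$ for $d \neq \id$, the needed nonnegativity reduces for $a \neq \id$ to the inequality $f_c(d, a) + f_c(da, d^{-1}) \geq 1$, which follows from bi-invariance of $c$ together with the cocycle identity for $f_c$.

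Next, the central generator $z = (1, \id)$ is cofinal in $\widetilde{G}_c$---for any $(n, g)$ we have $(n, g)^{-1} z^k = (k - n - f_c(g, g^{-1}), g^{-1}) \in P$ once $k$ is sufficiently large---so the standard H\"older-type argument for bi-ordered groups with a cofinal central cyclic subgroup yields a homomorphism $\rho : \widetilde{G}_c \to \RR$ extending $\ZZ \hookrightarrow \RR$, given by $\rho(x) = \lim_n m_n/n$ with $z^{m_n} \leq x^n < z^{m_n + 1}$. Setting $\tilde\rho(g) := \rho((0, g))$, the product formula in $\widetilde{G}_c$ yields the key identity $\tilde\rho(gh) = \tilde\rho(g) + \tilde\rho(h) - f_c(g, h)$, so $\bar\alpha := \tilde\rho \bmod \ZZ$ descends to a group homomorphism $\bar\alpha : G \to \RR/\ZZ = S^1$.

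To assemble the embedding, I would form the real analogue $\widetilde{G}_c^\RR$, namely $\RR \times G$ with the twisted product $(r, a)(s, b) = (r + s + f_c(a, b), ab)$, a central $\RR$-extension of $G$ containing $\widetilde{G}_c$. By the identity for $\tilde\rho$, the map $g \mapsto (-\tilde\rho(g), g)$ is a group homomorphism $G \to \widetilde{G}_c^\RR$. Taking $H$ to be a suitable torsion-free bi-orderable quotient of $\widetilde{G}_c^\RR$ that does not collapse the embedded copy of $\widetilde{G}_c$, the formula $\phi(g) := \bigl([(-\tilde\rho(g), g)],\, \bar\alpha(g)\bigr)$ defines an injective group homomorphism $G \to H \times S^1$: the $S^1$-component separates cosets of $K := \ker \bar\alpha$, and on $K$ the first component restricts to the embedding $K \hookrightarrow \widetilde{G}_c$, $g \mapsto (-\tilde\rho(g), g)$, which lands in the torsion-free subgroup $\widetilde{G}_c$. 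Comparing with the lexicographic circular ordering from Section~\ref{lex-orderings} and using bi-invariance of $c$ then shows $\phi$ respects the circular orderings.

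The main obstacle is constructing the bi-ordering on $H$. Conjugation in $\widetilde{G}_c^\RR$ can shift the real central coordinate by an integer coming from $f_c$, so declaring positivity by the $\RR$-coordinate alone is not conjugation-invariant. The resolution is to grade first by the extended real translation number $\rho^* : \widetilde{G}_c^\RR \to \RR$, $\rho^*((r, a)) = r + \tilde\rho(a)$---which \emph{is} a homomorphism, hence bi-invariant---and break ties using the bi-order on $\widetilde{G}_c$, then verify this descends through the torsion-free quotient.
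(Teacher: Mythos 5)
The paper does not prove this statement---it is quoted from \cite{Sw59}---so there is nothing internal to compare against; I am assessing your argument on its own. Your first three stages are sound and are indeed the standard opening of \'{S}wierczkowski's argument: bi-invariance of $c$ does make the positive cone of $\widetilde{G}_c$ conjugation-invariant, the central element $z=(1,\id)$ is cofinal, and the H\"older-type argument (quotient by the convex normal subgroup $C$ of $z$-infinitesimals, observe the quotient is Archimedean, apply H\"older) really does produce a homomorphism $\rho:\widetilde{G}_c\to\RR$ with $\rho(z)=1$, hence a rotation-number homomorphism $\bar\alpha:G\to S^1$ whose kernel $K$ is isomorphic to the bi-ordered group $C=\ker\rho$.

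The gap is in the final assembly, which is the actual content of the theorem. You need a bi-ordered group $H$ and a homomorphism $G\to H$ that is an order-embedding on $K$; you propose $H$ to be ``a suitable torsion-free bi-orderable quotient of $\widetilde{G}_c^\RR$,'' but such a quotient is asserted rather than constructed, and torsion-freeness is far from sufficient for bi-orderability. Worse, your proposed bi-ordering of $H$ does not define a total order: since $\rho^*$ is a homomorphism, its fibers are cosets of $\ker\rho^*=\{(-\tilde\rho(a),a):a\in G\}\cong G$ (indeed $\widetilde{G}_c^\RR\cong\RR\times\ker\rho^*$), and $\ker\rho^*$ meets $\widetilde{G}_c$ only in the convex subgroup $C$. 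So ``break ties using the bi-order on $\widetilde{G}_c$'' only compares elements differing by an element of $C$, leaving a whole copy of $G$ (which may contain torsion---note all torsion of $\widetilde{G}_c^\RR$ sits inside $\ker\rho^*$) unordered inside each $\rho^*$-fiber. To make the recipe literally work you would need the quotient to collapse $\ker\rho^*$ onto $C$, i.e.\ a normal complement to $K=\ker\bar\alpha$ in $G$, which need not exist (already for $G=\QQ$ circularly ordered lexicographically over $\QQ\to\QQ/\ZZ$ there is no complement to $\ZZ$ in $\QQ$, although there the theorem is saved only because $\widetilde{G}_c^\RR$ happens to be abelian and hence bi-orderable with no quotient needed). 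In general $\widetilde{G}_c^\RR\cong\RR\times G$ is bi-orderable if and only if $G$ is, so the existence and ordering of $H$ cannot be finessed this way; producing $H$ together with a homomorphism $G\to H$ restricting to an order-embedding of $K$ is precisely the part of \'{S}wierczkowski's proof that is missing here.
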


\begin{proposition}
A group is bi-orderable if and only if it admits a bi-invariant circular ordering and is torsion free.
\end{proposition}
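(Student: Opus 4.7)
My plan for the forward direction is the routine observation: in any bi-ordered group $(G,<)$, if $g\neq\id$ then iterating $\id<g$ (or $g<\id$) via bi-invariance gives $\id<g<g^2<\cdots$, so $g$ has infinite order; moreover, the secret circular ordering $c_<(g_1,g_2,g_3)=1$ iff $g_1<g_2<g_3$ cyclically is manifestly bi-invariant because $<$ is.

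For the converse, I would assume that $G$ is torsion-free and admits a bi-invariant circular ordering, and apply Theorem \ref{sw theorem} to obtain an order embedding $i\colon G\hookrightarrow H\times S^1$ with $H$ bi-ordered. The plan is to replace the $S^1$-factor by a torsion-free abelian group that still receives the image of $G$. I want to flag up front that one should not try to lift the composition $G\to S^1$ directly to a homomorphism $G\to\RR$: the image in $S^1$ can have torsion even when $G$ itself does not (for instance, $\ZZ\hookrightarrow\ZZ\times S^1$ via $n\mapsto(n,n/2)$ gives such an embedding of circularly-ordered groups). The cleanest fix is to quotient the ambient group by its torsion subgroup.

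Because $H$ is torsion-free, the torsion subgroup of $H\times S^1$ is exactly $T=\{\id\}\times(\QQ/\ZZ)$, and this is central in $H\times S^1$ since it sits inside the abelian direct factor $\{\id\}\times S^1$; the corresponding quotient is naturally isomorphic to $H\times(\RR/\QQ)$. Since $G$ is torsion-free, $i(G)\cap T=\{\id\}$, so the quotient map restricts to an injective homomorphism $\bar\imath\colon G\hookrightarrow H\times(\RR/\QQ)$. Now $\RR/\QQ$ is torsion-free abelian and therefore bi-orderable, and $H$ is bi-orderable by hypothesis, so the direct product $H\times(\RR/\QQ)$ is bi-orderable via the standard lexicographic positive cone. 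Since bi-orderability passes to subgroups, $\bar\imath$ exhibits $G$ as bi-orderable.

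The only substantive step in this plan is the choice to quotient by the torsion subgroup of $H\times S^1$ rather than attempt to lift to $\RR$; once that is in place, everything else is bookkeeping about torsion, direct products, and passage to subgroups.
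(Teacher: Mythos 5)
Your argument is correct and is essentially the paper's own proof: the forward direction is the same routine observation, and for the converse the paper likewise applies Theorem \ref{sw theorem} and then passes to the quotient $H \times S^1/T$ with $T = \QQ/\ZZ$, using torsion-freeness of $G$ to get an embedding into the bi-orderable group $H \times (S^1/T)$. Your version merely spells out the bookkeeping (identifying $T$ as the torsion subgroup, noting $S^1/T \cong \RR/\QQ$) in more detail.
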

\begin{proof}
If $G$ admits a bi-ordering $<$, then $G$ is clearly torsion free and setting $c(g_1, g_2, g_3) = 1$ whenever $g_1 <g_2 <g_3$ suffices to determine a bi-invariant circular ordering $c$ of $G$.

On the other hand, if $G$ admits a bi-invariant circular ordering then by Theorem \ref{sw theorem} there exists a bi-ordered group $H$ and an order-embedding $i: G \rightarrow H \times S^1$.  Let $T$ denote the subgroup $\mathbb{Q}/\mathbb{Z}$ of $S^1$.   Assuming that $G$ is torsion-free then yields an embedding of $G$ into $H \times S^1/T$.  The group $H \times S^1/T$ is bi-orderable since it is the product of a bi-orderable group and a torsion-free abelian group, thus so is $G$.
\end{proof}

In contrast, imposing torsion-freeness on a group admitting a left-invariant circular ordering is not enough to obtain left-orderability.  Torsion-free circularly-orderable groups that are not left-orderable are relatively easy to find among the class of $3$-manifold groups.

\begin{example}\label{seifert_example}
Let $M$ be a Seifert fibred rational homology sphere of the form $S^2(\frac{p_1}{q_1}, \frac{p_2}{q_2}, \frac{p_3}{q_3})$ where $\frac{1}{q_1}+ \frac{1}{q_2}+ \frac{1}{q_3} < 1$, where $p_i$ and $q_i$ are relatively prime for $i=1, 2, 3$ and represent the (unnormalized) Seifert invariants (see \cite{BRW05} or \cite{NJ} for the relevant background).  Then the fundamental group of $M$ is torsion-free, and moreover it fits into a short exact sequence
\[ 1 \rightarrow \mathbb{Z} \rightarrow \pi_1(M) \rightarrow \Delta \rightarrow 1
\]
where $\mathbb{Z}$ is central and generated by the class of the fibre of $M$, and $\Delta$ is a $(q_1, q_2, q_3)$-triangle group which is isomorphic to a subgroup of $\mathrm{PSL}(2, \mathbb{R})$ since  $\frac{1}{q_1}+ \frac{1}{q_2}+ \frac{1}{q_3} < 1$.  Since $\mathrm{PSL}(2, \mathbb{R})$ acts faithfully in an orientation-preserving way on $S^1$, it is circularly-orderable, thus so is $\Delta$.  Therefore $\pi_1(M)$ can be circularly ordered via a lexicographic argument.

On the other hand, $\pi_1(M)$ is left-orderable if and only if $M$ admits a horizontal foliation  \cite{BRW05}.  This happens exactly when the normalized Seifert invariants and Euler number of the Seifert fibration satisfy a certain system of diophantine inequalities \cite{EHN81, JN85, Naimi94}. 
\end{example}

\begin{example}\label{knot_example}
Let $K$ be a fibred hyperbolic knot in $S^3$ for which $\frac{p}{q}$-surgery yields a manifold $K(\frac{p}{q})$ with non-left-orderable fundamental group whenever $\frac{p}{q} \geq r$ for some $r \in \RR$.  Examples of such knots are provided by \cite{CW13, CGH16} and others, but for the sake of concreteness we may take $K$ to be the $(-2, 3, 2n+1)$-pretzel knot where $n \geq 3$, in which case $r = 2n+3$ by \cite{Nie18}.  

The groups $\pi_1(K(\frac{p}{q}))$ for $\frac{p}{q} \geq r$, despite being non-left-orderable, are generically circularly-orderable.  If $p \geq 2$, by \cite[Corollary 1.5]{BH18} the kernel of the short exact sequence
\[ 1 \rightarrow H \rightarrow \pi_1(K(p/q)) \rightarrow \mathbb{Z}/p\mathbb{Z} \rightarrow 1
\] 
is left-orderable for all but at most two values of $q$ coprime to $p$, and thus $ \pi_1(K(\frac{p}{q}))$ is circularly-orderable by a lexicographic argument.  Moreover, since $K$ is a hyperbolic knot \cite{KL}, $K(\frac{p}{q})$ is hyperbolic for all but finitely many $\frac pq$ \cite{Thurston}, and therefore all but finitely many of the groups $\pi_1(K(\frac{p}{q}))$ are torsion-free.
\end{example}

\subsection{Preliminary results}
The results in this section are essential to the proofs of Theorems \ref{tararin's theorem}, \ref{LO criterion}, and \ref{theorem-of-dave}

Let $G$ be a left-ordered group with ordering $<$.  A subgroup $C$ of $G$ is said to be \textit{$<$-convex} if for every $g, h \in C$ and $f \in G$, the implication $g<f<h \Rightarrow f \in C$ holds.  A subgroup $C$ of a left-orderable group $G$ is called \textit{relatively convex} if there exists a left ordering $<$ of $G$ such that $C$ is $<$-convex. The next two propositions are well known, and the proofs are left to the reader.

\begin{proposition}\label{convex-characterisation}
Let $(G, <)$ be a left-ordered group and $C$ a subgroup of $G$.  Then $C$ is $<$-convex if and only if the ordering $\prec$ of the left cosets $\{ gC\}_{g \in G}$ defined by $gC \prec hC \Leftrightarrow g<h$ is well-defined.
\end{proposition}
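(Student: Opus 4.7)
The plan is to isolate a single structural observation and then derive both directions of the equivalence from it. The observation is: if $C$ is $<$-convex, then every left coset $gC$ is $<$-convex as well. Indeed, if $gc_1 < f < gc_2$ with $c_1, c_2 \in C$, then left-multiplying by $g^{-1}$ (which preserves $<$) yields $c_1 < g^{-1}f < c_2$, and convexity of $C$ forces $g^{-1}f \in C$, i.e., $f \in gC$.

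For the forward direction, assume $C$ is $<$-convex and fix distinct cosets $aC \neq bC$ with representatives $g, g' \in aC$ and $h, h' \in bC$. To verify well-definedness, the task is to show that $g < h$ implies $g' < h'$. Suppose for contradiction that $h' \leq g'$; note $h' = g'$ is impossible since $aC \neq bC$, so in fact $h' < g'$. I would first rule out $h' \leq g$: if $h' < g$, then the chain $h' < g < h$ exhibits $g$ strictly between two elements of the coset $bC$, and coset-convexity forces $g \in bC$, a contradiction. Hence $g < h'$, and combined with $h' < g'$ this yields $g < h' < g'$, placing $h'$ between two elements of $aC$; coset-convexity now gives $h' \in aC$, again a contradiction. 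Reversing the roles of $(g,h)$ and $(g',h')$ gives the symmetric implication, so the equivalence of $g < h$ and $g' < h'$ is established.

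For the reverse direction, assume $\prec$ is well-defined and suppose $g, h \in C$ satisfy $g < f < h$. If $f \notin C$, then $fC \neq C$, so $\prec$ must assign a single definite relation between these two cosets. But the pair $(g, f)$ with $g \in C$, $f \in fC$ and $g < f$ declares $C \prec fC$, while the pair $(h, f)$ with $h \in C$, $f \in fC$ and $f < h$ declares $fC \prec C$. These opposite verdicts contradict well-definedness, so we must have $f \in C$, proving $C$ is $<$-convex.

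The only mildly delicate point is the case analysis in the forward direction: there are several possible orderings of the four representatives $g, g', h, h'$, so I need to funnel every potential violation of well-definedness through the coset-convexity observation. Once that observation is in hand, everything else is a two-step chase using totality of $<$.
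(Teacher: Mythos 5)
Your proof is correct. The paper explicitly leaves this proposition to the reader (``The next two propositions are well known, and the proofs are left to the reader''), so there is no argument to compare against; your version is the standard one, and the key observation that convexity of $C$ propagates to every left coset $gC$ via left-invariance of $<$ cleanly handles the case analysis in the forward direction, while the reverse direction is the usual two-representative contradiction.
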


It follows that a normal subgroup $C$ of a left-orderable group $G$ is relatively convex if and only if the quotient $G/C$ is left-orderable.  

\begin{proposition}\label{relatively-convex-transitive}
Let $G$ be a left-orderable group and $H$ a relatively convex subgroup. If $K$ is a relatively convex subgroup of $H$, then $K$ is a relatively convex subgroup of $G$.
\end{proposition}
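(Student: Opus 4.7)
The plan is to build, lexicographically over the coset space $G/H$, a left ordering of $G$ in which $K$ is convex. Specifically, let $<_G$ be a left ordering of $G$ making $H$ convex, and let $<_H$ be a left ordering of $H$ making $K$ convex. By Proposition \ref{convex-characterisation}, the rule $gH \prec hH \Leftrightarrow g <_G h$ yields a well-defined total order $\prec$ on $G/H$, and the left-invariance of $<_G$ promotes $\prec$ to an order that is invariant under the action of $G$ by left multiplication.

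With these ingredients in hand, I would define a relation $<$ on $G$ by declaring $g_1 < g_2$ precisely when either (i) $g_1 H \prec g_2 H$, or (ii) $g_1 H = g_2 H$ and $\id <_H g_1^{-1} g_2$. The heart of the proof is then checking that $<$ is a left ordering. Trichotomy follows directly from trichotomy for $\prec$ and $<_H$. Transitivity and left-invariance each reduce to a routine case analysis on whether the cosets involved coincide: the ``distinct cosets'' branches use the corresponding properties of $\prec$, while the ``same coset'' branches reduce, via the identity $(fg_1)^{-1}(fg_2) = g_1^{-1} g_2$, to the corresponding properties of $<_H$. A quick verification also shows that the restriction of $<$ to $H$ agrees with $<_H$.

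Finally, $<$-convexity of $K$ is essentially automatic: if $k_1 < g < k_2$ with $k_1, k_2 \in K \subset H$, then $k_1 H = k_2 H = H$, so the definition of $<$ forces $gH = H$, i.e., $g \in H$; applying $<_H$-convexity of $K$ in $H$ then yields $g \in K$. The main obstacle is just the bookkeeping in checking that $<$ is a left ordering --- nothing here is conceptually deep, but one has to be systematic about the interaction between the coset-level ordering $\prec$ and the subgroup-level ordering $<_H$ when several of the elements share a coset.
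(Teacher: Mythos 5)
Your argument is correct: the lexicographic order on $G$ built from $<_G$ via the coset order $\prec$ on $G/H$ and from $<_H$ on each coset is indeed a left ordering restricting to $<_H$ on $H$, and convexity of $K$ follows exactly as you describe. The paper explicitly leaves this proposition's proof to the reader as a well-known fact, and your construction is precisely the standard intended argument.
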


The next lemma is a general result about convex subgroups containing normal subgroups of a left-ordered group.

\begin{lemma}
\label{normal-convex-closure}
Let $G$ be a group with left-ordering $<$, and suppose $H \triangleleft G$ is a normal subgroup. The set 
\[
T= \{g \in G \mid \text{there exists } h_1,h_2 \in H \text{ such that } h_1 < g < h_2\}
\]
is a $<$-convex subgroup of $G$.
\end{lemma}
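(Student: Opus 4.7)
The plan is to prove convexity directly from the definition and then verify the subgroup axioms using the normality of $H$. Convexity is immediate: if $a<f<b$ with $a,b\in T$, pick $h_1\in H$ with $h_1<a$ and $h_4\in H$ with $b<h_4$; then $h_1<f<h_4$, so $f\in T$. That $\id\in T$ is equally easy provided $H\neq\{\id\}$: take any non-trivial $h\in H$, arrange $h>\id$ (replacing by $h^{-1}$ if necessary), and note $h^{-1}<\id<h$.

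The main subtlety lies in closure under inverses and products. In a left-ordered group, inversion does not reverse order in general, and right multiplication need not preserve order, so a sandwich $h_1<g<h_2$ does not automatically translate to one for $g^{-1}$ or for a product $g_1g_2$. The trick is to replace every right multiplication by an $h\in H$ with a left multiplication by a conjugate of $h$, which remains in $H$ precisely because $H\triangleleft G$.

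Concretely, for $g\in T$ with $h_1<g<h_2$, I would left-multiply by $g^{-1}$ to get $g^{-1}h_1<\id<g^{-1}h_2$, and then rewrite $g^{-1}h_i=(g^{-1}h_ig)g^{-1}=h_i'g^{-1}$ with $h_i'\in H$ by normality. Left-multiplication by $(h_1')^{-1}$ and $(h_2')^{-1}$ then sandwiches $g^{-1}$ between $(h_2')^{-1}$ and $(h_1')^{-1}$, both in $H$. For a product $g_1g_2$ with $h_1<g_1<h_2$ and $h_3<g_2<h_4$, I would left-multiply the second sandwich by $g_1$ and rewrite $g_1h_i=h_i'g_1$ with $h_i'\in H$, obtaining $h_3'g_1<g_1g_2<h_4'g_1$; left-multiplying the first sandwich by $h_3'$ and $h_4'$ extends this to $h_3'h_1<g_1g_2<h_4'h_2$ inside $H$.

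The only real obstacle is recognizing that the asymmetry between left and right multiplication can be circumvented using normality to turn right multiplications into left multiplications by conjugates. Once that conversion is in place, each required inclusion follows from a direct chain of order-preserving left multiplications.
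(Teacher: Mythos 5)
Your proof is correct and takes essentially the same route as the paper's: both use normality to rewrite a right multiplication $gh$ as $(ghg^{-1})g$ and then chain left-invariant inequalities to sandwich $g^{-1}$ and $g_1g_2$ between elements of $H$. (Your observation that $H$ must be non-trivial for $\id \in T$ is a fair edge case the paper leaves implicit.)
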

\begin{proof}
To ease notation, for $a \in T$ and $h \in H$, let $h^a = a^{-1}ha \in H$. Suppose $a,b \in T$, so there exists $h_1,h_2,k_1,k_2 \in H$ such that $h_1 < a < h_2$ and $k_1 < b < k_2$. Then $ab < ak_2 = k_2^aa < k_2^ah_2$ and $ab > ak_1 = k_1^aa > k_1^ah_1$. Since $k_i^ah_i \in H$ for $i \in \{1,2\}$, we conclude that $ab \in T$. To see $a^{-1} \in T$, we have $id > a^{-1}h_1 = h_1^{a^{-1}}a^{-1}$ so $(h_1^{a^{-1}})^{-1} > a^{-1}$. Similarly, $(h_2^{a^{-1}})^{-1} < a^{-1}$, so $a^{-1}\in T$ and $T$ is a subgroup of $G$. For convexity, suppose there exists a $g \in G$ such that $a < g < b$, where $a,b \in T$ are as above. Then $h_1 < a < g < b < k_2$, so $g \in T$, completing the proof.
\end{proof}

For the remainder of this section, let $Q$ be a subgroup of $(\QQ,+)$. Let $G$ be a left-orderable group with a normal subgroup $R$ isomorphic to $Q$. Let $K$ be the intersection of all relatively convex subgroups of $G$ containing $R$, which is again a relatively convex subgroup of $G$ (see for example \cite[Proposition 2.18]{CR16}).

Recall that in a left-ordered group $(G, <)$ a set $X \subset G$ is called \textit{$<$-cofinal} if for every $g \in G$ there exists $x, y \in X$ such that $x<g<y$.  An element $g \in G$ is called $<$-cofinal if $\langle g \rangle$ is cofinal as a set.

\begin{lemma} \label{cofinal-central-subgroup}
Let $R \subset K \subset G$ be given as above. Then $R$ is $<$-cofinal for all left orderings $<$ of $K$, and $R$ is central in $K$.
\end{lemma}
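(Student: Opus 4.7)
The plan is to handle cofinality and centrality in turn, tied together by the following unifying principle: by Proposition \ref{relatively-convex-transitive} and the minimality of $K$, any subgroup $H \subseteq K$ that contains $R$ and is relatively convex in $K$ must coincide with $K$. Indeed, such an $H$ is relatively convex in $G$ by transitivity, and then $K$ being the intersection of all relatively convex subgroups of $G$ containing $R$ forces $K \subseteq H$.

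For cofinality, fix an arbitrary left ordering $<$ of $K$ (any restriction of a left ordering of $G$ will do). Applying Lemma \ref{normal-convex-closure} to the group $K$ with its normal subgroup $R$ produces the $<$-convex subgroup
\[
T = \{ k \in K \mid \exists\, r_1, r_2 \in R \text{ with } r_1 < k < r_2 \},
\]
which visibly contains $R$. Being $<$-convex, $T$ is relatively convex in $K$, and the unifying principle forces $T = K$, which is exactly the $<$-cofinality of $R$ in $K$.

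For centrality, let $C = C_K(R)$ be the centralizer of $R$ in $K$. Since $R \triangleleft K$, one checks $C \triangleleft K$, and $C \supseteq R$ because $R$ is abelian. Conjugation induces an injection $K/C \hookrightarrow \operatorname{Aut}(R)$. Since $R$ is a rank-one torsion-free abelian group with divisible hull $\QQ$, every automorphism of $R$ extends uniquely to multiplication by an element of $\QQ^\ast$, so $\operatorname{Aut}(R) \hookrightarrow \QQ^\ast$. Once we show $K/C$ is torsion-free, it becomes an abelian torsion-free group, hence left-orderable, and then (by the remark following Proposition \ref{convex-characterisation}) the normal subgroup $C$ is relatively convex in $K$; the unifying principle then delivers $C = K$, i.e., $R$ is central in $K$.

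The only torsion in $\QQ^\ast$ is $\{\pm 1\}$, so torsion-freeness of $K/C$ amounts to excluding any $k \in K$ for which $krk^{-1} = r^{-1}$ for every $r \in R$. Suppose such $k$ exists; since $\phi_{k^{-1}} = \phi_k^{-1} = -1$ as well, we may replace $k$ by $k^{-1}$ if needed to assume $k > \mathrm{id}$. By cofinality choose $r \in R$ with $r > k$, whence automatically $r > \mathrm{id}$. Left-multiplication of $r > k$ by $r^{-1}$ yields $r^{-1}k < \mathrm{id}$. The hypothesis rewrites as $kr = r^{-1}k$, so $kr < \mathrm{id}$, and left-multiplying by $k^{-1}$ then gives $r < k^{-1}$; but $k > \mathrm{id}$ forces $k^{-1} < \mathrm{id} < r$, a contradiction. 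The principal difficulty of the argument is precisely this ruling-out of the Klein-bottle-type order-reversing automorphism: it is the sole configuration in which the unifying principle does not immediately apply, and it is exactly the point at which the cofinality of $R$ must be exploited.
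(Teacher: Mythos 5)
Your argument is correct, and it follows the paper's skeleton closely: cofinality is obtained exactly as in the paper, by applying Lemma~\ref{normal-convex-closure} to $R\triangleleft K$ and invoking your ``unifying principle'' (which is precisely the combination of Proposition~\ref{relatively-convex-transitive} with the minimality of $K$ that the paper uses), and centrality is obtained in both cases by showing that the conjugation homomorphism $K\to\operatorname{Aut}(R)$ has left-orderable image, so that its kernel $C=C_K(R)\supseteq R$ is a relatively convex normal subgroup and hence equals $K$. The one place you genuinely diverge is in certifying that the image is left-orderable. The paper proves the stronger fact that every element of $K$ preserves the positive cone of $R$: for $a>id$ in $K$ and $q>id$ in $R$, cofinality gives $a<q^k$, hence $id<a^{-1}q^ka$, so conjugation lands in the order-preserving automorphisms of $(R,<)$, a torsion-free abelian group. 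You instead use the classification $\operatorname{Aut}(R)\hookrightarrow\QQ^\ast$ for rank-one torsion-free abelian $R$, note that the only possible torsion is the inversion $r\mapsto r^{-1}$, and kill that single automorphism by a direct cofinality computation (which checks out: from $k>id$, $r>k$ and $kr=r^{-1}k$ one gets $r<k^{-1}<id<r$). Your route is slightly more dependent on the specific structure of $\operatorname{Aut}(R)$ but isolates the unique dangerous (Klein-bottle-type) automorphism; the paper's route avoids that classification and shows more, namely that the action is by order-preserving maps. Either way the conclusion is complete; the parenthetical ``any restriction of a left ordering of $G$ will do'' in your cofinality step is unnecessary (and should be dropped, since the claim is for all orderings of $K$), but the argument as written does treat an arbitrary ordering of $K$, so nothing is lost.
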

\begin{proof}
Choose an arbitrary left ordering $<$ of $K$. If $R$ is bounded above in the left-ordered group $(K,<)$, then by Lemma \ref{normal-convex-closure} there would be a proper $<$-convex subgroup in $K$ containing $R$, which by Proposition \ref{relatively-convex-transitive} contradicts the definition of $K$. Therefore $R$ is $<$-cofinal for all left orderings $<$ of $K$.

Fix a left-ordering $<$ of $K$. Since $R$ is $<$-cofinal, for each positive $a \in K$ and positive $q \in R$, there exists a $k$ such that $a< q^k$. It follows that $id < a^{-1}q^ka$. Therefore $a^{-1}qa$ is positive in $R$. Let $P$ be the positive cone of $(R,<)$. We now have that for any positive element $a \in K$, $a^{-1}Pa \subset P$. However, since $a^{-1}Pa$ and $P$ are both positive cones of left orderings on $K$, we can conclude $a^{-1}Pa = P$. Therefore $P = aPa^{-1}$ and therefore $k^{-1}Pk = P$ for all $k \in K$.

Conjugation of $R$ by elements of $K$ now gives a well-defined homomorphism $\phi:K \to \operatorname{Aut}(R,<)$, whose kernel is the centralizer $C$ of $R$. Since $\operatorname{Aut}(H,<)$ is a left-orderable group, $C$ is relatively convex in $K$, so we must have that $C = K$. Therefore $R$ is central in $K$.
\end{proof}

Every subgroup $Q$ of $(\QQ,+)$ has exactly two left orderings, which are the restrictions of the two left orderings on $\QQ$. A subgroup $Q \subset \QQ$ is \textit{dense} if for all $a,b \in Q$ such that $a < b$, there exists a $c \in Q$ such that $a < c < b$, where $<$ is any of the two left orderings on $Q$. Equivalently, $Q \subset \QQ$ is a dense subgroup if and only if $Q \not\cong \ZZ$.

\begin{lemma} \label{normal-infinitesimals}
Let $R \subset K \subset G$ be given as above, and suppose $R$ is isomorphic to a dense subgroup of $\QQ$. Let $<$ be a left ordering of $K$ and consider the set of \textit{infinitesimals} $M \subset K$ given by
\[
M = \{k \in K \mid p^{-1} < k < q \mbox{ for all } p, q > id, p, q \in R \}.
\]
Then 
\begin{enumerate}
\item $M$ is a normal subgroup of $K$ for all left orderings $<$ of $K$, and
\item there exists a left ordering $\prec$ of $K$ such that $K/M$ is isomorphic to a subgroup of $(\QQ,+)$.
\end{enumerate}
\end{lemma}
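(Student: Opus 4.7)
The plan is to handle the two claims separately. For part (1), to show $M$ is a subgroup, the key ingredient is density of $R$: given $k_1, k_2 \in M$ and any $q > id$ in $R$, I would split $q = q' q''$ with $q', q'' > id$ in $R$ by first picking (using density) a $q' \in R$ with $id < q' < q$ and setting $q'' = q'^{-1} q$. Then $k_1 < q'$ and $k_2 < q''$ combine, using the centrality of $R$ in $K$ from Lemma~\ref{cofinal-central-subgroup}, to give $k_1 k_2 < q' k_2 = k_2 q' < q'' q' = q$; the lower bound follows symmetrically, and closure under inversion is immediate from the defining inequalities. Normality is a direct consequence of centrality: since any $p, q \in R$ commute with any $g \in K$, the condition $p^{-1} < gkg^{-1} < q$ is equivalent by left-multiplication to $p^{-1} < k < q$, which holds since $k \in M$.

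For part (2), my plan is to first fit an arbitrary left ordering $<$ of $K$ into a H\"older-type construction, then argue that $\prec$ can be chosen so that the image lands in $\QQ$ rather than merely $\RR$. Identifying $R$ with a dense subgroup of $\QQ \subset \RR$, define $\phi : K \to \RR$ by $\phi(k) = \sup\{r \in R : r < k\}$; the supremum exists in $\RR$ because $R$ is $<$-cofinal in $K$ by Lemma~\ref{cofinal-central-subgroup}. The centrality of $R$ makes $\phi$ a homomorphism: from $r_1 < k_1$ and $r_2 < k_2$ one derives $r_1 r_2 < k_1 k_2$ by left-multiplying and using that $R$-elements commute with the $k_i$; taking suprema and using that $R$ is dense in $\RR$ gives $\phi(k_1) + \phi(k_2) \leq \phi(k_1 k_2)$. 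A similar centrality argument shows $\phi(k^{-1}) = -\phi(k)$, which yields the reverse inequality by applying the previous estimate to $k_1^{-1}$ and $k_1 k_2$. A direct check of the condition $\phi(k) = 0$ identifies $\ker \phi$ with $M_<$, so $K/M_< \hookrightarrow \RR$ for any left ordering $<$ of $K$.

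The main obstacle will be the last step: choosing $\prec$ so that $\phi_\prec(K)$ lies in $\QQ$. For a generic $<$, the image could be any Archimedean subgroup of $\RR$ containing $R$---for instance $\ZZ + \sqrt{2}\,\ZZ$, which does not embed in $\QQ$---so some extra care is needed. My strategy is to exploit the flexibility in choosing $\prec$ to force any element of $K$ whose $\phi$-value would be rationally independent from $R$ into $M_\prec$. One natural implementation is a Zorn's lemma argument on pairs $(<, L)$, where $<$ is a left ordering of $K$ and $L$ is a $<$-convex subgroup disjoint from $R$, partially ordered by inclusion of $L$; a maximal $L$ should have the property that no element outside $L$ is rationally independent from $R$, forcing $K/L$ to be locally cyclic. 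Alternatively, one can try to construct $\prec$ by a lexicographic argument relative to a subgroup transverse to the divisible hull $\QQ R$ of $R$ inside $K$. Either way, once $K/M_\prec$ is shown to be locally cyclic, the structure theorem for rank-one torsion-free abelian groups identifies it with a subgroup of $(\QQ, +)$.
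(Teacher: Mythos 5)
Your part (1) has a genuine error in the normality step. From $k\in M$ you want $p^{-1} < gkg^{-1} < q$; left-multiplying by $g^{-1}$ and using centrality of $R$ only yields $p^{-1}g^{-1} < kg^{-1} < qg^{-1}$, and there is no way to cancel the $g^{-1}$ on the right, because the ordering is merely left-invariant. So ``$p^{-1} < gkg^{-1} < q$ is equivalent by left-multiplication to $p^{-1} < k < q$'' is not a valid deduction, and centrality of $p$ and $q$ does not rescue it. (The same slip appears in your closure argument: $k_1 < q'$ does not give $k_1k_2 < q'k_2$ since $k_2$ is not central; the chain must be reordered as $k_1k_2 < k_1q'' = q''k_1 < q''q' = q$.) The paper's proof of normality is genuinely different: it uses cofinality of $R$ (Lemma \ref{cofinal-central-subgroup}) to identify $M$ with the set of non-$<$-cofinal elements of $K$, and then shows that cofinality is preserved under conjugation by sandwiching the conjugator $f$ between elements $a<f<b$ of the central subgroup $R$. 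Your H\"older-type homomorphism $\phi$ from part (2) would in fact deliver normality for free, since $\ker\phi = M$, but you do not make that connection.

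The larger gap is in part (2). The construction $\phi(k)=\sup\{r\in R : r<k\}$ is correct and gives a clean, more explicit proof that $K/M\hookrightarrow\RR$ than anything in the paper; but, as you yourself flag, the passage from $\RR$ to $\QQ$ is the heart of the statement and you only offer candidate strategies for it. The paper's actual route is your first suggestion made precise: by Zorn's lemma (which requires knowing that unions of chains of relatively convex subgroups are relatively convex) take $N$ maximal among relatively convex subgroups of $K$ meeting $R$ trivially, fix $\prec$ making $N$ convex, check $N=M_\prec$, and observe that maximality forces $K/N$ to have no proper nontrivial relatively convex subgroups; the punchline is then the theorem that a left-orderable group with no proper relatively convex subgroups is torsion-free rank-one abelian \cite[Proposition 5.1.9]{KM96}, hence a subgroup of $\QQ$. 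Your phrase ``a maximal $L$ should have the property that no element outside $L$ is rationally independent from $R$, forcing $K/L$ to be locally cyclic'' is exactly the assertion that needs proof: one must show that if $K/N$ had rank at least $2$ it would contain a proper nontrivial relatively convex subgroup whose preimage in $K$ contradicts maximality. Without that step, and without a correct normality argument, the proposal is incomplete.
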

\begin{proof}
For 1, fix an arbitrary left ordering $<$ of $K$. By Lemma \ref{cofinal-central-subgroup}, $R$ is both central and $<$-cofinal in $K$. It follows from the facts that $R$ is central and isomorphic to a dense subgroup of $\QQ$ that $M$ is a subgroup. Furthermore, since $R$ is $<$-cofinal, $M$ is precisely the set of elements in $K$ that are not $<$-cofinal. Thus to show that $M$ is normal, it suffices to show that for any $k \in K$, $k$ is $<$-cofinal if and only if $fkf^{-1}$ is $<$-cofinal for all $f \in K$.

Suppose that $k$ is $<$-cofinal. We deal only with the case of $k>id$, the case of $k<id$ being similar.  Then there exists $p, q \in K$ with $p,q >id$ and $t \in \mathbb{N}$ such that $ p<k^t<q$.

Now let $f \in K$ be given.  We can choose $a, b \in R$ such that $a<f<b$, note that $a$ and $b$ can be chosen so that $pab^{-1} >id$.  Now the inequalities $p<k^t$, $a<f$ and $b^{-1}<f^{-1}$ combine to give $pab^{-1} <fk^tf^{-1}$ since $R$ is central.  Note that since $id<pab^{-1}$ is cofinal and $(pab^{-1})^n < (fk^tf^{-1})^n = (fkf^{-1})^{tn}$ for all $n$, it follows that $fkf^{-1}$ is cofinal as well. This completes the proof of 1.

For 2, consider the set $\mathcal R$ of relatively convex subgroups $L$ of $K$ other than $K$ itself. Since $R$ is $<$-cofinal for all left-orderings $<$ of $K$, $\mathcal R$ is the set of all relatively convex subgroups $L$ of $K$ with the property that $L \cap R = \{id\}$. The set $\mathcal R$ forms a poset under inclusion. Since the union of any chain of relatively convex subgroups is again relatively convex \cite{Kokorin}, every chain has an upper bound. Therefore by Zorn's lemma, there exists a maximal element in this poset, call it $N$.

Fix a left-ordering $\prec$ of $K$ such that $N$ is $\prec$-convex. Since $H$ is $\prec$-cofinal and $N \cap H = \{id\}$, it must be the case that
\[
N = \{ k \in K \mid p^{-1} \prec k \prec q \mbox{ for all } p, q \succ id, p, q \in R\}.
\]
The set $N$ is in fact the infinitesimals, so $N$ is a normal subgroup of $K$ by statement 1 of the theorem. Consider the quotient $K/N$ and let $\pi:K \to K/N$ be the quotient map. Since $N$ is relatively convex, $K/N$ is left-orderable. Moreover, $K/N$ contains no proper relatively convex subgroups, since the preimage of such a group under $\pi$ would be a proper relatively convex subgroup containing $N$. Therefore $K/N$ is a torsion-free rank one abelian group \cite[Proposition 5.1.9]{KM96}, and therefore $K/N$ is isomorphic to a subgroup of $\QQ$.
\end{proof}

\subsection{The main theorem.}
Theorem \ref{tararin's theorem} is a result of Tararin, which plays a key role in the proof of our main criterion.  We therefore provide a proof for the sake of completeness.  

\begin{theorem}\cite[Theorem 5.4.1]{KM96}
\label{tararin's theorem}
If a normal subgroup of a left-orderable group $G$ is isomorphic to $\mathbb{Q}$, then it is a relatively convex subgroup of $G$.
\end{theorem}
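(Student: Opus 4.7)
The plan is to exploit the setup of the preliminary subsection: let $K$ denote the intersection of all relatively convex subgroups of $G$ containing $R$, so that $K$ is itself relatively convex. By Proposition \ref{relatively-convex-transitive}, it will suffice to exhibit a left ordering of $K$ under which $R$ is convex, since such an ordering shows $R$ is relatively convex in $K$ and hence in $G$.

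Next, since $R \cong \mathbb{Q}$ is a dense subgroup of $\mathbb{Q}$, I will invoke Lemma \ref{normal-infinitesimals} to obtain a left ordering $\prec$ of $K$ whose infinitesimal subgroup $M$ is normal in $K$ and satisfies $K/M \hookrightarrow (\mathbb{Q},+)$. Lemma \ref{cofinal-central-subgroup} guarantees that $R$ is $\prec$-cofinal, so $R \cap M = \{id\}$ and $R$ injects into $K/M$. The composition $R \hookrightarrow K/M \hookrightarrow \mathbb{Q}$ then embeds $R \cong \mathbb{Q}$ as a nonzero divisible subgroup of $\mathbb{Q}$, and an elementary argument shows that any such subgroup must be all of $\mathbb{Q}$. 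Consequently the composition is surjective, forcing $K = RM$. Combined with $R$ being central in $K$ (Lemma \ref{cofinal-central-subgroup}), $M$ being normal in $K$, and $R \cap M = \{id\}$, this yields an internal direct product decomposition $K \cong R \times M$.

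To finish, I will fix any left orderings on $R$ and on $M$ (the latter exists because $M$ is a subgroup of the left-orderable group $G$) and equip $K \cong R \times M$ with the lexicographic ordering that compares the $M$-coordinate first, breaking ties by the $R$-ordering. A routine check confirms that $R = \{(r, id_M) : r \in R\}$ is convex in this ordering, which completes the argument. The conceptual crux, and the step I expect to require the most care, is the middle paragraph: divisibility of $R \cong \mathbb{Q}$ is what forces both $K/M \cong \mathbb{Q}$ and the surjection $R \twoheadrightarrow K/M$, which in turn produces the direct product decomposition. Everything else is either bookkeeping with the preliminary lemmas or a standard lex-ordering construction.
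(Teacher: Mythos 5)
Your argument is correct and follows essentially the same route as the paper's proof: both pass to the minimal relatively convex subgroup $K \supseteq R$, apply Lemma \ref{normal-infinitesimals} to obtain the normal subgroup $M$ with $M \cap R = \{id\}$ and $K/M$ embedding in $\QQ$, use divisibility of $R \cong \QQ$ to force $K = RM$, and combine this with centrality of $R$ (Lemma \ref{cofinal-central-subgroup}) to get $K \cong R \times M$. The only cosmetic difference is that you finish by exhibiting the lexicographic order on $R \times M$ directly, whereas the paper uses the minimality of $K$ to conclude $M = \{id\}$ and $K = R$; both endings are valid.
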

\begin{proof}
Let $R$ denote a normal subgroup of $G$ isomorphic to $(\QQ,+)$ and let $K$ be the intersection of all relatively convex subgroups of $G$ that contain $R$. Since the intersection of relatively convex subgroups is relatively convex (see for example \cite[Proposition 2.18]{CR16}), $K$ is relatively convex. Our goal is to show $K = R$. To that end, let $M$ be a normal subgroup of $K$ such that $M \cap R = \{id\}$ and $K/M$ is isomorphic to a subgroup of $\QQ$. Such a subgroup exists by Lemma \ref{normal-infinitesimals}.

Let $\pi:K \to K/M$ be the quotient map. Since $M \cap R = \{id\}$, $R$ maps injectively into $K/M$, so $\pi(R)$ is a subgroup of $K/M$ isomorphic to $\QQ$. Therefore $\pi(R) = K/M$ and $K = MR$. Since $R$ is central by Lemma \ref{cofinal-central-subgroup}, we may now conclude that $K \cong M \times R$, so $R$ is relatively convex in $K$. Since $K$ is the intersection of all relatively convex subgroups of $G$ containing $R$, by Proposition \ref{relatively-convex-transitive} we conclude that $M \cong \{id\}$ and $K = R$.  
\end{proof}

The next proposition is stated in more general terms than what is needed in this section, but its full strength will be used later in the proof of Theorem \ref{theorem-of-dave}. Recall from Section \ref{background} that for a circularly-ordered group $(G,c)$, there exists a left-orderable central extension $\wt{G}_c$. 

\begin{proposition}
\label{divisible lift}
Let $Q$ be a subgroup of $\QQ$ containing $\ZZ$. For every circular ordering $c$ of $Q/\mathbb{Z}$, the lift $\wt{(Q/\mathbb{Z})}_c$ is isomorphic to $Q$.
\end{proposition}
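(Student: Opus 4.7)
The plan is to identify $\wt{(Q/\ZZ)}_c$ as a rank-one torsion-free abelian group containing a distinguished copy of $\ZZ$ with quotient isomorphic to $Q/\ZZ$, and then to invoke the classification of rank-one torsion-free abelian groups to conclude $\wt{(Q/\ZZ)}_c \cong Q$.

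First, $\wt{(Q/\ZZ)}_c$ is by construction a left-orderable central extension
\[
1 \lra \ZZ \lra \wt{(Q/\ZZ)}_c \lra Q/\ZZ \lra 1,
\]
so it is torsion-free. To see that it is also abelian, one uses the general fact that in any central extension of abelian groups, the commutator $[g,h]$ lies in the central $\ZZ$ and depends only on the classes $\bar g, \bar h \in Q/\ZZ$; centrality makes the resulting map $\omega : Q/\ZZ \times Q/\ZZ \to \ZZ$ biadditive. If $\bar g$ has order $n$ in $Q/\ZZ$, then $n\cdot \omega(\bar g, \bar h) = \omega(n\bar g, \bar h) = 0$, and torsion-freeness of $\ZZ$ forces $\omega \equiv 0$. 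Hence $\wt{(Q/\ZZ)}_c$ is abelian.

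Next, I would tensor the short exact sequence with $\QQ$. Because $(Q/\ZZ)\otimes\QQ = 0$, we obtain $\wt{(Q/\ZZ)}_c \otimes \QQ \cong \QQ$, so (combined with torsion-freeness) $\wt{(Q/\ZZ)}_c$ embeds as a rank-one subgroup of $\QQ$. Any two copies of $\ZZ$ inside $\QQ$ differ by a rational scalar, so after rescaling we may assume the distinguished central $\ZZ$ coincides with the standard $\ZZ \subset \QQ$. Thus $\wt{(Q/\ZZ)}_c$ is realised as a subgroup $A$ of $\QQ$ containing $\ZZ$, with $A/\ZZ \cong Q/\ZZ$ as abstract abelian groups.

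Finally, I would invoke the classification of rank-one torsion-free abelian groups: a subgroup $A \subset \QQ$ containing $\ZZ$ is determined up to isomorphism by its type $p \mapsto v_p(A) := \sup\{n \geq 0 : 1/p^n \in A\}$, and this type is entirely encoded in the $p$-primary decomposition of the torsion group $A/\ZZ$ (cyclic of order $p^{v_p(A)}$ when $v_p(A)$ is finite, and the Pr\"ufer $p$-group $\ZZ[1/p]/\ZZ$ when $v_p(A) = \infty$). Since $\wt{(Q/\ZZ)}_c/\ZZ \cong Q/\ZZ$ as abstract groups, they have the same $p$-primary components, and hence the same type, as $Q/\ZZ$, so $\wt{(Q/\ZZ)}_c \cong Q$. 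The step that requires the most care is the abelianness argument, where the torsion/torsion-free interaction between $Q/\ZZ$ and the central $\ZZ$ is essential; the remainder is classical rank-one structure theory.
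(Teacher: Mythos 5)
Your proof is correct, and it reaches the conclusion by a partly different route from the paper. The paper proves the stronger statement that \emph{any} extension $1 \to \ZZ \to G \to Q/\ZZ \to 1$ with $G$ torsion-free (centrality is not used) satisfies $G \cong Q$: restricting the extension to a finitely generated subgroup $H \subset G$ gives $1 \to \ZZ \to H \to \ZZ/k\ZZ \to 1$, so $H$ is a torsion-free virtually cyclic group and hence cyclic; thus $G$ is torsion-free and locally cyclic, which yields abelianness and rank one in a single stroke and embeds $G$ in $\QQ$. You instead use centrality of the kernel directly: the commutator descends to a biadditive pairing $Q/\ZZ \times Q/\ZZ \to \ZZ$, which vanishes because the source is torsion and the target is torsion-free, and you then obtain rank one by tensoring the sequence with $\QQ$. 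Both arguments are sound; from the point where $\wt{(Q/\ZZ)}_c$ is realised as a subgroup of $\QQ$ containing $\ZZ$, the two proofs coincide, since your appeal to types of rank-one torsion-free groups is exactly the paper's B\'ezout computation showing that such a subgroup $R$ satisfies $\langle \frac{1}{k}\rangle \subset R$ if and only if $\pi(R)$ contains the order-$k$ subgroup of $\QQ/\ZZ$, i.e.\ that $R$ is determined by its image in $\QQ/\ZZ$. What the paper's version buys is the weaker hypothesis (torsion-free rather than central), though every application in the paper concerns a central extension, so nothing is lost in your approach; what your version buys is a shorter, purely homological first half that avoids the fact that torsion-free virtually-$\ZZ$ groups are $\ZZ$. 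One point worth stating explicitly in your write-up is that torsion-freeness of $\wt{(Q/\ZZ)}_c$ comes from its left-orderability, which the positive cone $P$ constructed in Section \ref{background} provides.
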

\begin{proof}  In fact we will prove something stronger.  Suppose $1 \to \ZZ \overset{\iota}\to G \to Q/\ZZ \to 1$ is a short exact sequence with $G$ torsion-free.  We will prove $G \cong Q$.

Let $H \subset G$ be a finitely generated subgroup.  Since $H$ is torsion-free, $\iota(\ZZ) \cap H \cong \ZZ$. Since every finitely-generated subgroup of $Q/\ZZ$ is a finite cyclic group, restricting the short exact sequence to $H$ gives $1 \to \ZZ \to H \to \ZZ/k\ZZ \to 1$ for some $k \in \NN$.  Now $H$ is a torsion-free group with a finite-index subgroup isomorphic to $\ZZ$, so $H \cong \ZZ$.  Therefore $G$ is a torsion-free locally cyclic group, so it is isomorphic to a subgroup of $\QQ$.  Fix an identification $G \subset \QQ$ with the copy of $\ZZ$ corresponding to the kernel of the map $G\to Q/\ZZ$ being precisely the subgroup $\mathbb{Z}$ of $\mathbb{Q}$.

Let $\ZZ \subset R \subset \QQ$ be an arbitrary subgroup containing $\ZZ$, and let $\pi:\QQ \to \QQ/\ZZ$ be the projection map. We claim that $\pi(R)$ contains the unique subgroup of order $k$ in $\QQ/\ZZ$ if and only if $\langle \frac1k \rangle$ is a subgroup of $R$. If $\langle \frac1k \rangle \subset R$, then it is clear that $\pi(R)$ contains the unique subgroup of order $k$. For the other direction, suppose there is an $r \in R$ such that $\pi(r)$ has order $k$. Then $r = \frac ak$ for some $a \in \ZZ$ such that $\gcd(a,k) = 1$. Therefore there exists integers $x$ and $y$ such that $ax + ky = 1$. Then $xr + y = \frac1k(ax + ky) = \frac1k$, which is in $R$ since $\ZZ \subset R$ and $\langle \frac 1k \rangle \subset R$.

Now we have subgroups $G$ and $Q$ of $\QQ$ such that $\ZZ \subset G,Q \subset \QQ$ and $\pi(G) = \pi(Q)$. Let $0 \neq g = \frac ak$ where $a$ and $k$ are coprime. Then $\pi(g)$ has order $k$, so $\langle \frac 1k \rangle$ is a subgroup of both $G$ and $Q$, and since $g \in \langle \frac 1k \rangle$, we have $G \subset Q$. The converse is the same, so $G = Q$.
\end{proof}

\begin{theorem}
Suppose that $G$ is a circularly-orderable group and that $H$ is a normal subgroup of $G$ isomorphic to $\mathbb{Q} /\mathbb{Z}$.  Then $G/H$ is left-orderable.
\end{theorem}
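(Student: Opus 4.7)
The plan is to lift everything to the canonical left-ordered central extension $\widetilde{G}_c$ associated to a circular ordering $c$ on $G$, identify the preimage of $H$ as a copy of $\mathbb{Q}$, and then invoke Tararin's theorem.

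First, fix a circular ordering $c$ of $G$ and form the left-orderable central extension
\[
1 \lra \mathbb{Z} \lra \widetilde{G}_c \overset{\pi}{\lra} G \lra 1
\]
as recalled in Section~\ref{background}. Let $\widetilde{H} := \pi^{-1}(H)$. Restricting the sequence to $\widetilde{H}$ gives a central extension
\[
1 \lra \mathbb{Z} \lra \widetilde{H} \lra H \lra 1,
\]
and since $\widetilde{H}$ sits inside the left-orderable group $\widetilde{G}_c$, it is torsion-free. Because $H \cong \mathbb{Q}/\mathbb{Z}$, the argument of Proposition~\ref{divisible lift} applies (it only used torsion-freeness of the middle group, not the specific way the extension arose), and we conclude $\widetilde{H} \cong \mathbb{Q}$.

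Next, since $H$ is normal in $G$, its preimage $\widetilde{H}$ is normal in $\widetilde{G}_c$. Thus $\widetilde{G}_c$ is a left-orderable group containing a normal subgroup isomorphic to $\mathbb{Q}$, so by Tararin's theorem (Theorem~\ref{tararin's theorem}) $\widetilde{H}$ is relatively convex in $\widetilde{G}_c$. By Proposition~\ref{convex-characterisation} (applied in a left ordering of $\widetilde{G}_c$ in which $\widetilde{H}$ is convex), the quotient $\widetilde{G}_c / \widetilde{H}$ is left-orderable.

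Finally, $\widetilde{H}$ contains the kernel $\mathbb{Z}$ of $\pi$, so $\pi$ induces an isomorphism $\widetilde{G}_c / \widetilde{H} \cong G/H$. Therefore $G/H$ is left-orderable. I do not foresee a significant obstacle: the only point that requires care is that the torsion-free-extension-of-$\mathbb{Q}/\mathbb{Z}$-by-$\mathbb{Z}$ statement is read off from the internals of the proof of Proposition~\ref{divisible lift} rather than from its verbatim statement, and this is what pins down $\widetilde{H} \cong \mathbb{Q}$ so that Tararin's theorem can be applied.
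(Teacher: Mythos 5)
Your proposal is correct and follows essentially the same route as the paper: lift to $\widetilde{G}_c$, identify the preimage of $H$ as a copy of $\mathbb{Q}$ via Proposition~\ref{divisible lift}, apply Tararin's theorem together with Proposition~\ref{convex-characterisation}, and descend via the induced isomorphism $\widetilde{G}_c/\widetilde{H} \cong G/H$. The only cosmetic difference is that you invoke the stronger statement established inside the proof of Proposition~\ref{divisible lift} (any torsion-free central extension of $\mathbb{Q}/\mathbb{Z}$ by $\mathbb{Z}$ is $\mathbb{Q}$), whereas the paper applies the proposition as stated to the restricted circular ordering on $H$; both are valid.
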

\begin{proof}
Fix a circular ordering $c$ of $G$, which we will use to construct lifts.  Since $H$ is normal in $G$, it follows that $\wt H$ is normal in $\wt G$; since $H$ is isomorphic to $\mathbb{Q} /\mathbb{Z}$ we know that $\wt H \cong \mathbb{Q}$ by Proposition \ref{divisible lift}.  Thus $\wt G / \wt H$ is left-orderable by Proposition \ref{convex-characterisation} and Theorem \ref{tararin's theorem}, so the proof is complete once we observe that the map $\phi: \wt G / \wt H \rightarrow G/H$ given by $(n, g)\wt H \mapsto gH$ is an isomorphism.

The map $\phi$ is well-defined, because if $(n, g) \wt H = (m, h) \wt H$ then there exists $(\ell, f) \in \wt H$ (in particular, $f \in H$) such that $$(n,g) = (m, h)(\ell , f) = (m+\ell+f_c(h, f), hf).$$  Thus $g=hf$ and so $gH = hH$.

The map is injective because if $gH = hH$ then $g=hf$ for some $f \in H$.  Then for every $n, m \in \mathbb{Z}$ the quantity $\ell = n-m-f_c(h,f)$ yields $(\ell, f) \in \wt H$ with $(n,g) = (m,h)(\ell, f)$, so that $(n,g) \wt H = (m,h) \wt H$.  That the map is surjective is obvious.
\end{proof}

\begin{corollary}
\label{LO criterion}
A group $G$ is left-orderable if and only if $G \times \mathbb{Q}/\mathbb{Z}$ is circularly-orderable.
\end{corollary}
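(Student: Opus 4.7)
The plan is to treat the two implications separately; both are essentially one-line applications of machinery already developed in the excerpt.

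For the forward direction, assume $G$ is left-orderable and fix a left ordering $<$ of $G$, with associated secret left ordering $c_{<}$. The group $\mathbb{Q}/\mathbb{Z}$ sits inside $S^1$, so it inherits a circular ordering $c_{\mathbb{Q}/\mathbb{Z}}$. Apply the lexicographic construction of Section \ref{lex-orderings} to the split short exact sequence
\[
1 \lra G \lra G \times \mathbb{Q}/\mathbb{Z} \overset{\pi}{\lra} \mathbb{Q}/\mathbb{Z} \lra 1,
\]
using $c_{\mathbb{Q}/\mathbb{Z}}$ on the quotient and $c_<$ on the kernel. The resulting lexicographic circular ordering witnesses that $G \times \mathbb{Q}/\mathbb{Z}$ is circularly-orderable.

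For the reverse direction, assume $G \times \mathbb{Q}/\mathbb{Z}$ is circularly-orderable. The subgroup $H := \{id\} \times \mathbb{Q}/\mathbb{Z}$ is central, hence normal, in $G \times \mathbb{Q}/\mathbb{Z}$, and is isomorphic to $\mathbb{Q}/\mathbb{Z}$. The theorem immediately preceding this corollary then guarantees that $(G \times \mathbb{Q}/\mathbb{Z})/H$ is left-orderable; but this quotient is canonically isomorphic to $G$, so $G$ is left-orderable.

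There is no real obstacle here once the preceding theorem is in hand; the only care required is to correctly invoke the lexicographic construction in the forward direction and to identify the quotient $(G \times \mathbb{Q}/\mathbb{Z})/(\{id\} \times \mathbb{Q}/\mathbb{Z})$ with $G$ in the reverse direction. All the substantive work lies in the preceding theorem, whose hypotheses are set up precisely to apply to this situation.
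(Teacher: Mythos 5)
Your proof is correct and follows exactly the same route as the paper: the forward direction is the lexicographic construction applied to the split sequence with left-ordered kernel $G$ and circularly-ordered quotient $\mathbb{Q}/\mathbb{Z}$, and the reverse direction applies the immediately preceding theorem to the normal subgroup $\{id\}\times\mathbb{Q}/\mathbb{Z}$ and identifies the quotient with $G$. No gaps.
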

\begin{proof}
If $G \times \mathbb{Q}/\mathbb{Z}$ is circularly-orderable then the previous theorem implies left-orderability of $G$.  On the other hand, if $G$ is left-orderable then $G \times \mathbb{Q}/\mathbb{Z}$ is circularly-orderable by the lexicographic construction from Section \ref{lex-orderings}.
\end{proof}

\begin{corollary}\label{spectrum existence}
A group $G$ is left-orderable if and only if $G \times \mathbb{Z}/ n \mathbb{Z}$ is circularly-orderable for all $n \geq 2$.
\end{corollary}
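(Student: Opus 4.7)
The plan is to reduce the statement to Corollary~\ref{LO criterion} by exploiting the fact that $\QQ/\ZZ$ is the directed union of its finite cyclic subgroups $\ZZ/n\ZZ$. The forward direction is immediate from the lexicographic construction of Section~\ref{lex-orderings}: combining any left ordering of $G$ with the canonical circular ordering of $\ZZ/n\ZZ$ inherited from $S^1$, applied to the split extension
\[
1 \lra G \lra G \times \ZZ/n\ZZ \lra \ZZ/n\ZZ \lra 1,
\]
produces a circular ordering of $G \times \ZZ/n\ZZ$ for every $n \geq 2$.

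For the reverse direction, assume that $G \times \ZZ/n\ZZ$ is circularly-orderable for every $n \geq 2$. By Corollary~\ref{LO criterion} it suffices to show that $G \times \QQ/\ZZ$ is circularly-orderable. I would invoke the standard local characterization of circular-orderability---a group is circularly-orderable if and only if each of its finitely generated subgroups is---which follows from a routine Tychonoff compactness argument on the closed subspace of circular orderings inside $\{-1,0,1\}^{G^3}$. Given any finitely generated subgroup $H \leq G \times \QQ/\ZZ$, the projection of $H$ onto $\QQ/\ZZ$ is finitely generated and therefore finite cyclic, isomorphic to $\ZZ/n\ZZ$ for some $n$. Hence $H$ embeds in $G_0 \times \ZZ/n\ZZ$ for some finitely generated $G_0 \leq G$, and in particular in $G \times \ZZ/n\ZZ$. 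The latter is circularly-orderable by hypothesis, and since subgroups of circularly-orderable groups are circularly-orderable, so is $H$. The local property then yields circular-orderability of $G \times \QQ/\ZZ$, and Corollary~\ref{LO criterion} delivers left-orderability of $G$.

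I expect no serious obstacle here: all the substantive work has already been done in Corollary~\ref{LO criterion}, and the only additional input is the local property of circular-orderability, which is standard. The argument is really just the remark that a finitely generated subgroup of $G \times \QQ/\ZZ$ cannot see more torsion than some single $\ZZ/n\ZZ$, so the family of hypotheses indexed by $n$ is enough to ``approximate'' the full group $G \times \QQ/\ZZ$.
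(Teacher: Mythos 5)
Your proposal is correct and follows essentially the same route as the paper: both directions match, and the reverse direction is exactly the paper's argument of passing a finitely generated subgroup of $G \times \QQ/\ZZ$ into some $G \times \ZZ/n\ZZ$, invoking the local characterization of circular-orderability, and then applying Corollary \ref{LO criterion}.
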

\begin{proof}
Assume that $G \times \mathbb{Z}/ n \mathbb{Z}$ is circularly-orderable for all $n \geq 2$. Choose a finitely generated subgroup $H$ of $G \times \mathbb{Q}/\mathbb{Z}$, and let $\pi_1$ and $\pi_2$ be the projections from $G \times \mathbb{Q}/\mathbb{Z}$ onto the first and second factors respectively.  Then $H  \subset \pi_1(H) \times \pi_2(H) \subset G \times \mathbb{Z} /n \mathbb{Z}$ for some $n$, since every finitely generated subgroup of $\mathbb{Q}/\mathbb{Z}$ is cyclic.  As $G \times \mathbb{Z} /n \mathbb{Z}$ is assumed to be circularly-orderable, so is $H$; as $H$ was an arbitrary finitely generated subgroup of $G \times \mathbb{Q}/\mathbb{Z}$ we can conclude that $G \times \mathbb{Q}/\mathbb{Z}$ is circularly-orderable \cite[Lemma 2.14]{Clay}.   It follows that $G$ is left-orderable.

On the other hand, if $G$ is left-orderable then circular-orderability of $G \times \mathbb{Z}/ n \mathbb{Z}$ for all $n$ is again the lexicographic construction from Section \ref{lex-orderings}.
\end{proof}

\section{The obstruction spectrum of a circularly-orderable group}
\label{obstruction section}

In light of Corollary \ref{spectrum existence}, for any circularly-orderable group $G$ we can define the {\it obstruction spectrum of $G$} as
\[
\operatorname{Ob}(G):= \{n \in \mathbb N_{> 1} \mid G \times \mathbb Z/n\mathbb Z \mbox{ is not circularly-orderable}\}.
\]
With this notation, Corollary \ref{spectrum existence} is equivalent to the statement that a circularly-orderable group $G$ is left-orderable if and only if $\operatorname{Ob}(G) = \emptyset$.  It is worth noting that if $n \mid m$, and $n \in \Ob(G)$ for some circularly-orderable group $G$, then $m \in \Ob(G)$.  

In the remaining sections we investigate what properties of a group $G$ are captured by $\Ob(G)$, and what possible subsets of $\mathbb{N}_{>1}$ occur as $\Ob(G)$ for some non-left-orderable group $G$.

\subsection{Properties of a group encoded by its obstruction spectrum.} 
\label{obstruction structure subsection}
Since a finite group is circularly-orderable if and only if it is cyclic, the obstruction spectrum detects torsion.  Indeed, suppose there is an element of order $k$ in a circularly-ordered group $G$.  Then $\operatorname{Ob}(G) \supset \bigcup_{p \mid k} p\NN$ where $p$ is a prime number.  More explicitly, we set
\[
T(G) = \{ k \in \mathbb{N} \mid \exists g \in G \mbox{ of order $k$} \}
\]
and define 
\[ 
\operatorname{Ob}_T(G):= \{ n \in \mathbb{N}_{>1} \mid \exists k \in T(G) \mbox{ such that } \gcd(k, n) \neq 1 \}.
\]
Note that $\operatorname{Ob}_T(G) \subset \operatorname{Ob}(G)$, because if $g \in G$ is of order $k$ and $\gcd(k,n) \neq 1$ then $\langle g \rangle \times \mathbb{Z} / n \mathbb{Z}$ is not cyclic, hence not circularly-orderable.  We call $\operatorname{Ob}_T(G)$ the \textit{torsion part} of the obstruction spectrum.

In the case that $G$ admits a bi-invariant circular ordering, the obstruction spectrum of $G$ is exactly equal to its torsion part.  

\begin{proposition}
If $G$ admits a bi-invariant circular ordering then $\Ob(G) = \Ob_T(G)$.
\end{proposition}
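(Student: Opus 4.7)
The inclusion $\Ob_T(G) \subseteq \Ob(G)$ has already been established in the preceding discussion, so the plan is to prove the reverse inclusion. Concretely, suppose $n \geq 2$ satisfies $\gcd(n,k) = 1$ for every $k \in T(G)$; I will exhibit a circular ordering on $G \times \ZZ/n\ZZ$.

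The first step is to apply \'{S}wierczkowski's theorem (Theorem \ref{sw theorem}) to the bi-invariant circular ordering on $G$ in order to obtain a bi-ordered group $H$ and an order-embedding $i : G \hookrightarrow H \times S^1$, where $S^1 = \RR/\ZZ$ carries its natural circular ordering. In particular $i$ is an injective group homomorphism.

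The second step is to use $i$ to embed $G \times \ZZ/n\ZZ$ into $H \times S^1$. Define
\[
\phi : G \times \ZZ/n\ZZ \lra H \times S^1, \qquad \phi(g,k) = i(g) \cdot \bigl(e_H,\tfrac{k}{n} + \ZZ\bigr).
\]
Since $H \times S^1$ is a direct product and $\tfrac{k}{n} + \ZZ$ is additive in $k$, $\phi$ is a homomorphism. For injectivity, suppose $\phi(g,k)$ is trivial. Then $i(g) = (e_H, -\tfrac{k}{n}+\ZZ)$, so $i(g)^n = (e_H, 0)$; injectivity of $i$ gives $g^n = e_G$. If $g \neq e_G$, then the order $d$ of $g$ lies in $T(G)$, satisfies $d > 1$ and $d \mid n$, so $\gcd(d,n) = d > 1$, contradicting the hypothesis on $n$. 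Hence $g = e_G$, and then $\phi(g,k) = (e_H,\tfrac{k}{n}+\ZZ)$ being trivial forces $k \equiv 0 \pmod n$. Thus $\phi$ is an injective group homomorphism.

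The final step is to produce a circular ordering on $H \times S^1$. Applying the lexicographic construction of Section \ref{lex-orderings} to the short exact sequence
\[
1 \lra H \lra H \times S^1 \lra S^1 \lra 1,
\]
with $H$ left-ordered (by any of its bi-orderings) and $S^1$ carrying its natural circular ordering, produces a circular ordering on $H \times S^1$. Pulling this circular ordering back along $\phi$ gives the desired circular ordering on $G \times \ZZ/n\ZZ$, so $n \notin \Ob(G)$. There is no real obstacle in the argument; the whole point is that \'{S}wierczkowski's embedding sequesters all of the torsion of $G$ into the $S^1$-factor, and the coprimality hypothesis is exactly what prevents the copy of $\ZZ/n\ZZ$ adjoined inside $S^1$ from colliding with $i(G)$.
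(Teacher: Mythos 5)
Your proof is correct and takes essentially the same approach as the paper's: both apply \'{S}wierczkowski's theorem and then use the coprimality hypothesis to realize $G \times \ZZ/n\ZZ$ as a subgroup of the lexicographically circularly-ordered group $H \times S^1$ by adjoining a primitive $n$-th root of unity to $i(G)$. You simply spell out the resulting embedding and its injectivity in more detail than the paper does.
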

\begin{proof}
Suppose $G$ admits a bi-invariant circular ordering, and let $i : G \rightarrow H \times S^1$ be an order-embedding, where $H$ is a bi-ordered group and $H \times S^1$ is equipped with a lexicographic bi-invariant circular ordering (Theorem \ref{sw theorem}).  Then if $g \in G$ is of order $k$, $i(g)=(id, r)$ where $r$ is a $k$-th primitive root of unity.  If $n>1$ and $(k, n) =1$ for every $k \in T(G)$, then for each $n$-th primitive root of unity $r$ we have  $G \times \mathbb{Z}/n\mathbb{Z} \cong \langle i(G), r \rangle \subset H \times S^1$.  Thus $G \times \mathbb{Z}/n\mathbb{Z}$ is circularly-orderable.  The other direction is obvious.
\end{proof}

In general, however, one expects the containment $\Ob_T(G) \subset \Ob(G)$ to be proper, which prompts the following question. 
\begin{question}
If $G$ is torsion-free, what property of $G$ is $\Ob(G)$ detecting?\footnote{We suspect that it may be related to the notion of dynamical forcing of rotation numbers, introduced in \cite{Calegari06}.}
 \end{question}
 In the cases where $G$ is torsion-free and $G$ arises as the fundamental group of  $3$-manifold, such as Examples \ref{seifert_example} and \ref{knot_example}, it would be interesting to explicitly compute and give a topological interpretation of their obstruction spectra.  We expect such a topological interpretation may be possible, since we can already observe that the structure of  $\Ob(G)$ carries topological consequences.  First we note:

\begin{lemma}
\label{mapping lemma}
If $\phi :G \rightarrow H$ is a homomorphism between circularly-orderable groups with left-orderable kernel, then $\Ob(G) \subset \Ob(H)$.
\end{lemma}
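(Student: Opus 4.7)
My plan is to prove the contrapositive: if $n \notin \Ob(H)$, then $n \notin \Ob(G)$. That is, I will show that if $H \times \ZZ/n\ZZ$ is circularly-orderable, then so is $G \times \ZZ/n\ZZ$. This reduction is natural because we have a tool available (the lexicographic construction of Section \ref{lex-orderings}) that builds a circular ordering on the middle term of a short exact sequence from a circular ordering on the quotient and a left-ordering on the kernel.

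The key construction is the homomorphism $\phi \times \id : G \times \ZZ/n\ZZ \lra H \times \ZZ/n\ZZ$, defined by $(g,k) \lmt (\phi(g), k)$. Its kernel is $\ker(\phi) \times \{0\}$, which is isomorphic to $\ker(\phi)$ and hence left-orderable by hypothesis. Its image is the subgroup $\phi(G) \times \ZZ/n\ZZ$ of $H \times \ZZ/n\ZZ$, which inherits a circular ordering as a subgroup of a circularly-orderable group (restriction of a circular ordering is again a circular ordering). This produces a short exact sequence
\[
1 \lra \ker(\phi) \lra G \times \ZZ/n\ZZ \lra \phi(G) \times \ZZ/n\ZZ \lra 1
\]
with left-orderable kernel and circularly-orderable quotient.

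Applying the lexicographic circular ordering construction from Section \ref{lex-orderings} to this short exact sequence yields a circular ordering on $G \times \ZZ/n\ZZ$, proving that $n \notin \Ob(G)$. Taking contrapositives gives the desired inclusion $\Ob(G) \subset \Ob(H)$. There is no real obstacle here; the proof is essentially a one-line application of the lex construction once one has identified the correct short exact sequence, and the main point is just to note that the hypothesis that $\ker(\phi)$ is left-orderable is precisely what is needed for the lex construction to apply.
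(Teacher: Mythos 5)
Your proof is correct and is essentially identical to the paper's: both consider the map $\phi \times \id$ on $G \times \ZZ/n\ZZ$, note its kernel is $\ker(\phi)$ and hence left-orderable, and apply the lexicographic construction using the circular ordering restricted from $H \times \ZZ/n\ZZ$. Your version just spells out the short exact sequence slightly more explicitly.
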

\begin{proof}
Suppose that $H \times \ZZ/n\ZZ$ is circularly-orderable for some $n \in \mathbb{N}_{>1}$.  Then the kernel of the map $\psi: G \times \ZZ/n\ZZ \rightarrow H \times \ZZ/n\ZZ$ given by $\psi(g, n) = (\phi(g), n)$ is isomorphic to $\ker(\phi)$, and thus is left-orderable.  Therefore $G \times \ZZ/n\ZZ$ can be lexicographically circularly ordered.
\end{proof}

From this, we are able to prove a generalization of a result of Rolfsen, wherein he used left-orderability of fundamental groups to provide an obstruction to the existence of degree one maps between $3$-manifolds \cite{Rolfsen04}.

\begin{proposition}
\label{manifold obstruction}
Suppose that $M$, $N$ are compact, connected irreducible $3$-manifolds with infinite circularly-orderable fundamental groups.  If there exists a homomorphism $\pi_1(M) \rightarrow \pi_1(N)$ with nontrivial image, then $\Ob(\pi_1(M)) \subset \Ob(\pi_1(N))$.
\end{proposition}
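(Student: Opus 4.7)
The plan is to apply Lemma \ref{mapping lemma} to $\phi \colon \pi_1(M) \to \pi_1(N)$: since both source and target are circularly-orderable by hypothesis, the entire task reduces to showing that $K := \ker(\phi)$ is left-orderable. The first observation is that $\pi_1(N)$ is torsion-free, a classical consequence of the sphere theorem together with the elliptization theorem for compact connected irreducible $3$-manifolds with infinite fundamental group. Hence the nontrivial image of $\phi$ is a nontrivial subgroup of a torsion-free group, so it is infinite, and therefore $K$ has infinite index in $\pi_1(M)$. Assuming $K \neq \{id\}$ (the trivial case being clear), let $p \colon \tilde{M} \to M$ denote the cover corresponding to $K$, so $\pi_1(\tilde{M}) \cong K$; since $K$ has infinite index, $\tilde{M}$ is non-compact, and by the equivariant sphere theorem of Meeks--Simon--Yau, $\tilde{M}$ is again irreducible.

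The next step is to show that $K \cong \pi_1(\tilde{M})$ is locally indicable and then apply the Burns--Hale theorem \cite{BH72} to conclude it is left-orderable. Given a nontrivial finitely generated subgroup $H \leq K$, let $\tilde{M}_H \to \tilde{M}$ be the corresponding cover, again a non-compact irreducible $3$-manifold with $\pi_1(\tilde{M}_H) \cong H$. Scott's compact core theorem furnishes a compact $3$-submanifold $C \subset \tilde{M}_H$ with $\pi_1(C) \cong H$. Using irreducibility of $\tilde{M}_H$ to absorb any sphere components of $\partial C$ into the $3$-balls they bound without altering $\pi_1(C)$, and noting that $\partial C$ must remain nonempty since $\tilde{M}_H$ is non-compact, we arrange $\partial C$ to be a nonempty disjoint union of positive-genus surfaces. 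The half-lives--half-dies argument then forces $b_1(C) > 0$, so $H$ surjects onto $\mathbb{Z}$.

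The principal technical hurdle is the local indicability step, which requires stitching together Scott's core theorem, the equivariant sphere theorem, and half-lives--half-dies; half-lives--half-dies is typically stated for orientable manifolds, so in the non-orientable case one first passes to an orientation double cover (which preserves both irreducibility and circular-orderability of $\pi_1$) and then transfers the conclusion back through an index-at-most-two argument on $K$. Once $K$ is known to be left-orderable, Lemma \ref{mapping lemma} immediately delivers $\Ob(\pi_1(M)) \subset \Ob(\pi_1(N))$.
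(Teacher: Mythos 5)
Your argument is correct and follows essentially the same route as the paper's proof: torsion-freeness of the fundamental groups forces the nontrivial image to be infinite, hence the kernel has infinite index, local indicability of infinite-index subgroups gives left-orderability of the kernel, and Lemma \ref{mapping lemma} finishes. The only difference is that the paper simply cites \cite[cf.\ proof of Theorem 1.1]{BRW05} for the fact that infinite-index subgroups of such $3$-manifold groups are locally indicable, whereas you unpack that citation via Scott's compact core and the half-lives--half-dies argument.
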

\begin{proof}
Under the assumptions of the proposition, $\pi_1(M)$ and $\pi_1(N)$ are torsion free.  Thus if the image of $\pi_1(M)$ is nontrivial, it is infinite, hence the kernel of the map $\pi_1(M) \rightarrow \pi_1(N)$ is an infinite index subgroup.  Any infinite index subgroup of $\pi_1(M)$ is locally indicable, hence left-orderable \cite[cf. proof of Theorem 1.1]{BRW05}.  The conclusion follows from Lemma \ref{mapping lemma}.
\end{proof}

\begin{corollary} \cite[cf. Theorem 1.1]{Rolfsen04}
With $M$ and $N$ as in Proposition \ref{manifold obstruction}, if $\Ob(\pi_1(M)) \not\subset \Ob(\pi_1(N))$ then every map $f: M \rightarrow N$ is nullhomotopic.
\end{corollary}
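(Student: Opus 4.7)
The plan is to proceed by contrapositive: assuming $f : M \rightarrow N$ is not nullhomotopic, I will show that the induced map on fundamental groups has nontrivial image, whence Proposition \ref{manifold obstruction} yields the containment $\Ob(\pi_1(M)) \subset \Ob(\pi_1(N))$. The key auxiliary fact is that a compact, connected, irreducible $3$-manifold with infinite fundamental group is aspherical, so that homotopy classes of maps $M \to N$ are determined by homomorphisms $\pi_1(M) \to \pi_1(N)$ up to conjugation.

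First I would record that $N$ is a $K(\pi_1(N),1)$. Since $N$ is irreducible, the sphere theorem implies $\pi_2(N)=0$; since $\pi_1(N)$ is infinite the universal cover $\widetilde N$ is noncompact, and being a simply connected noncompact $3$-manifold with vanishing $\pi_2$, it is contractible by standard arguments (Hurewicz plus Whitehead applied to $\widetilde N$). Hence $N$ is aspherical.

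Next, for any continuous $f : M \rightarrow N$, functoriality produces a homomorphism $f_* : \pi_1(M) \rightarrow \pi_1(N)$ (well-defined up to conjugation by the choice of basepoint path). Because $N$ is aspherical, the classical fact $[M,N] \cong \mathrm{Hom}(\pi_1(M), \pi_1(N))/\text{conjugation}$ applies, so $f$ is nullhomotopic if and only if $f_*$ is the trivial homomorphism. Thus under the hypothesis that $f$ is not nullhomotopic, the image of $f_*$ is a nontrivial subgroup of $\pi_1(N)$, and Proposition \ref{manifold obstruction} directly yields $\Ob(\pi_1(M)) \subset \Ob(\pi_1(N))$. The contrapositive is precisely the desired conclusion.

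The only delicate point I anticipate is verifying asphericity of $N$ in full generality (including, if permitted, the case with nonempty boundary); this is routine but requires care to cite the sphere theorem together with the fact that an infinite-$\pi_1$ irreducible $3$-manifold has contractible universal cover. Once that input is in hand, the argument is a clean invocation of Proposition \ref{manifold obstruction}.
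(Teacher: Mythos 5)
Your argument is correct and is precisely the intended one: the paper leaves the proof implicit, and it amounts to the contrapositive of Proposition \ref{manifold obstruction} together with the standard fact that $N$ is aspherical (sphere theorem plus contractibility of the noncompact universal cover), so that a map inducing the trivial homomorphism on $\pi_1$ is nullhomotopic. Nothing further is needed.
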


\subsection{The structure of obstruction spectra.}
In this section we characterize the sets $S \subset \mathbb{N}_{>1}$ that arise as $\Ob_T(G)$ for some group $G$, and show that every such set also arises as $\Ob(G)$ for some torsion-free group $G$.  

\begin{proposition}
Let $\Pi$ denote the set of prime numbers in $\NN$. 
\begin{enumerate} 
\item For every group $G$ there exists $S \subset \Pi$ such that $\Ob_T(G) = \bigcup_{p \in S} p\NN$.  
\item For every set $S \subset \Pi$ there exists a group $G$ such that $\Ob_T(G) = \bigcup_{p \in S} p\NN$.
\end{enumerate}
\end{proposition}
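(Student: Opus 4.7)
The strategy is to observe that $\Ob_T(G)$ is entirely determined by the set of primes occurring as orders of torsion elements of $G$, and then to realize any such set of primes concretely in an abelian group that embeds in $\mathbb{Q}/\mathbb{Z}$.

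For part (1), I would define $S$ to be the set of primes $p \in \Pi$ such that $G$ contains an element of order $p$. The key lemma is that if $g \in G$ has order $n$ and $p$ is a prime dividing $n$, then $g^{n/p}$ has order $p$; consequently, a prime $p$ divides some element of $T(G)$ if and only if $p \in S$. From the definition $\Ob_T(G) = \{n \in \mathbb{N}_{>1} \mid \gcd(n,k) \neq 1 \text{ for some } k \in T(G)\}$, the condition $n \in \Ob_T(G)$ is equivalent to the existence of a prime $p$ with $p \mid n$ and $p \in S$, i.e., $n \in p\NN$ for some $p \in S$. This gives $\Ob_T(G) = \bigcup_{p \in S} p\NN$ directly.

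For part (2), given $S \subset \Pi$, I would take
\[
G = \bigoplus_{p \in S} \mathbb{Z}/p\mathbb{Z}.
\]
I would first verify that $G$ is circularly-orderable: the assignment sending the generator of the $p$-th summand to $\tfrac{1}{p} \in \QQ/\ZZ$ extends to an injective homomorphism $G \hookrightarrow \QQ/\ZZ$, essentially by the Chinese Remainder Theorem (a finite sum $\sum a_p/p$ with $0 \le a_p < p$ vanishes in $\QQ/\ZZ$ only if every $a_p = 0$). Since $\QQ/\ZZ$ sits inside $S^1$, which is circularly-orderable, so is $G$, and hence $\Ob_T(G)$ is defined. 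Next, every nonzero element of $G$ has finite order equal to a product of distinct primes in $S$, so the set of primes appearing as orders of elements in $G$ is exactly $S$. Applying part (1) yields $\Ob_T(G) = \bigcup_{p \in S} p\NN$.

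Neither part presents a serious obstacle: (1) is essentially a bookkeeping exercise about prime divisors of torsion orders, and (2) reduces to exhibiting an explicit abelian group, the only technical point being the embedding of $\bigoplus_{p\in S} \ZZ/p\ZZ$ into $\QQ/\ZZ$ to secure circular-orderability. The mild subtlety worth flagging is that $\Ob_T(G)$ is only formally defined when $G$ is circularly-orderable, which is why the verification that our candidate $G$ in part (2) embeds in $\QQ/\ZZ$ is not purely cosmetic.
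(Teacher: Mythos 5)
Your proof is correct and follows essentially the same route as the paper: part (1) is the same bookkeeping with prime divisors of torsion orders, and your group $\bigoplus_{p\in S}\ZZ/p\ZZ$ is isomorphic to the paper's $G_S\subset\QQ/\ZZ$ generated by $\{\tfrac1p\mid p\in S\}$, with the same embedding into $\QQ/\ZZ$ securing circular-orderability. The only cosmetic difference is that you deduce the reverse inclusion in (2) by invoking part (1), while the paper verifies directly that $G_S\times\ZZ/k\ZZ$ is circularly-orderable for $k$ coprime to all of $S$ (a marginally stronger statement about $\Ob$ rather than $\Ob_T$); both are fine.
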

\begin{proof}
For (1), set $S = \{ p \in \Pi \mid \exists k \in T(G) \mbox{ such that } p | k \}$.  From the remarks at the beginning of Subsection \ref{obstruction structure subsection} it follows that $p \mathbb{N} \subset \Ob_T(G)$ for all $p \in S$.  On the other hand if $p$ is prime and does not divide $k$ for all $k \in T(G)$, then $p \notin \Ob_T(G)$ by definition.

To prove (2), define the subgroup $G_S \subset \QQ/\ZZ$ generated by the set $\{\frac 1p \mid p \in S\}$.  Since $G_S$ contains an element of order $p$ for all $p \in S$, $\bigcup_{p \in S} p \NN \subset \Ob_T(G_S)$.  Conversely, if $\gcd(k,p) = 1$ for all $p \in S$, then $G_S \times \ZZ/k\ZZ \cong H$ where $H \subset \QQ/\ZZ$ is the subgroup generated by $G_S \cup \{\frac 1k\}$.  Therefore $G_S \times \ZZ/k\ZZ$ is circularly-orderable, completing the proof.
\end{proof}

Thus our task is reduced to showing that for all $S\subset \Pi$, there exists a torsion-free circularly-orderable group $G$ such that $\Ob(G) = \bigcup_{p \in S}p\NN$.  We begin by producing a torsion-free group $G$ for each prime $p \geq 2$ for which $\Ob(G) = p \mathbb{N}$.  Our primary tool in the examples that follow is \cite[Proposition 6.17]{Ghys01}, which for finitely generated amenable groups guarantees that all circular orderings of the group arise lexicographically from a short exact sequence $ 1  \rightarrow K \rightarrow G \rightarrow H \rightarrow 1 $ where $K$ is left-orderable and $H$ is a subgroup of $S^1$ (see Proposition \ref{form of circular orders} below for details).  As the quotient map in every such short exact sequence factors thought $G/G'$, our computations of $\Ob(G)$ are tied to the structure of $G/G'$.  There is no reason to expect a similar relationship between $\Ob(G)$ and $G/G'$ to hold for non-amenable groups.

We first prepare several lemmas.

\begin{lemma} 
\label{kernel}
Assume that $G_1, G_2$ and $H$ are groups, and let $\phi : G_1 \times G_2 \rightarrow H$ be a homomorphism with $\phi_i = \phi|_{G_i}$.
If $\phi_2$ is injective, then $\ker \phi \cong \phi_1^{-1}(\phi_2(G_2))$.  
\end{lemma}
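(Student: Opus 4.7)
The plan is to exhibit an explicit isomorphism $\psi : \phi_1^{-1}(\phi_2(G_2)) \to \ker\phi$. The key observation underlying everything is that since the subgroups $G_1 \times \{e\}$ and $\{e\} \times G_2$ commute elementwise in $G_1 \times G_2$, their images $\phi_1(G_1)$ and $\phi_2(G_2)$ commute elementwise in $H$; in particular $\phi(g_1, g_2) = \phi_1(g_1)\phi_2(g_2)$.

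First I would unpack what it means to lie in $\ker\phi$. The identity $\phi(g_1, g_2) = \phi_1(g_1)\phi_2(g_2) = e_H$ is equivalent to $\phi_1(g_1) = \phi_2(g_2^{-1})$, so $(g_1, g_2) \in \ker\phi$ forces $\phi_1(g_1) \in \phi_2(G_2)$, and by injectivity of $\phi_2$ the component $g_2$ is uniquely recovered as $g_2 = \phi_2^{-1}(\phi_1(g_1)^{-1})$. This immediately suggests setting
\[
\psi(g_1) = \bigl(g_1,\; \phi_2^{-1}(\phi_1(g_1)^{-1})\bigr)
\]
for $g_1 \in \phi_1^{-1}(\phi_2(G_2))$. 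Well-definedness as a map into $\ker\phi$ and bijectivity follow directly from this discussion: the inverse is projection onto the first coordinate, which lands in $\phi_1^{-1}(\phi_2(G_2))$ by the previous observation.

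The only point that requires a little care is verifying that $\psi$ is a homomorphism. Expanding both $\psi(g_1 g_1')$ and $\psi(g_1)\psi(g_1')$ and applying $\phi_2$ to the second coordinates, the required equality reduces to the commutation $\phi_1(g_1)\phi_1(g_1') = \phi_1(g_1')\phi_1(g_1)$ in $H$. This is where we use that both elements lie in $\phi_2(G_2)$: writing $\phi_1(g_1) = \phi_2(g_2)$ for some $g_2 \in G_2$ and invoking the elementwise commutation of $\phi_1(G_1)$ with $\phi_2(G_2)$ noted above, we get $\phi_1(g_1)\phi_1(g_1') = \phi_2(g_2)\phi_1(g_1') = \phi_1(g_1')\phi_2(g_2) = \phi_1(g_1')\phi_1(g_1)$. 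So there is no real obstacle here; the lemma is essentially a direct calculation once the correct candidate map $\psi$ is identified, and the only conceptual ingredient is the commutativity of $\phi_1(G_1)$ with $\phi_2(G_2)$ inside $H$.
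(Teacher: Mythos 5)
Your proof is correct and is essentially the same as the paper's: both identify $\ker\phi$ with $\phi_1^{-1}(\phi_2(G_2))$ via the first-coordinate projection, you simply writing down its inverse $\psi$ explicitly. The paper works in the other direction---restricting the projection $\pi_1$ (already a homomorphism) to $\ker\phi$ and checking it is injective there---which makes the homomorphism verification automatic and avoids the commutation computation you carry out for $\psi$.
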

\begin{proof}
First note that if $\phi_2$ is injective, then the projection $\pi_1 : G_1 \times G_2 \rightarrow G_1$ is injective on $\ker \phi$.  To see this, note that if $(g, a), (h, b) \in \ker \phi$ and $\pi_1(g, a) =\pi_1(h, b)$ then $g=h$ so that $(g,a)^{-1}(h,b) = (1, a^{-1}b) \in \ker \phi$, and so $a=b$ by injectivity of $\phi_2$.  Thus $\ker \phi \cong \pi_1(\ker \phi)$.

We conclude by observing that $\phi_1^{-1}(\phi_2(G_2)) = \pi_1(\ker \phi)$.  For $g \in \pi_1(\ker \phi)$ happens if and only if there exists $h \in G_2$ such that $(g,h) \in \ker \phi$, which happens if and only if $g \in \phi_1^{-1}(\phi_2(h))$.
\end{proof}

\begin{lemma}
\label{form of circular orders}
Suppose that $(G,c)$ is a countable, amenable circularly-ordered group.  Then $c$ is lexicographic relative to a short exact sequence 
\[ 1 \rightarrow K \rightarrow G \rightarrow C \rightarrow 1
\] 
where $K$ is left-orderable and $C$ is a subgroup of $S^1$.
\end{lemma}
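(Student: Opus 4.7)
The plan is to use the Poincar\'e rotation number to produce the desired short exact sequence. Since $(G,c)$ is countable and circularly-ordered, a dynamical realization gives a faithful action $G \hookrightarrow \mathrm{Homeo}_+(S^1)$ that recovers $c$ from the cyclic order of a generic orbit. Lifting to the universal cover embeds $\widetilde{G}_c$ into the group $\mathrm{Homeo}_{\ZZ}(\RR)$ of homeomorphisms commuting with integer translations, with the central $\ZZ \subset \widetilde{G}_c$ acting by these translations, and the Poincar\'e rotation number defines a quasi-morphism $\rho : \widetilde{G}_c \to \RR$ with $\rho|_{\ZZ} = \mathrm{id}$.

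The first key step is to use amenability to promote $\rho$ to a genuine homomorphism. Because $G$, and hence $\widetilde{G}_c$, is amenable, a standard averaging argument (or, equivalently, the existence of a $G$-invariant probability measure on $S^1$) produces a homomorphism $\rho : \widetilde{G}_c \to \RR$ that still agrees with the identity on the central $\ZZ$. It therefore descends to a homomorphism $\bar\rho : G \to \RR/\ZZ = S^1$. Setting $K = \ker\bar\rho$ and $C = \bar\rho(G) \subset S^1$ produces the short exact sequence $1 \to K \to G \to C \to 1$, with $C$ circularly-ordered as a subgroup of $S^1$.

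For left-orderability of $K$, consider the subgroup $K' = \rho^{-1}(0) \subset \widetilde{G}_c$. Because $\rho$ restricts to the identity on the central $\ZZ$, the projection $\widetilde{G}_c \to G$ sends $K'$ bijectively onto $K$: every element of $K$ admits a lift with integer rotation number, which can be adjusted by a central translation so that its rotation number is $0$, and this lift is unique. As a subgroup of the left-orderable group $\widetilde{G}_c$, the group $K'$ is left-orderable, and this ordering transports to $K$.

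Finally, one must verify that $c$ coincides with the lexicographic circular ordering built from $C \subset S^1$ and this left-ordering on $K$. When $\bar\rho(g_1),\bar\rho(g_2),\bar\rho(g_3)$ are distinct, the cyclic order of the rotation numbers on $S^1$ matches the cyclic order of corresponding orbit points, which is $c(g_1,g_2,g_3)$; when two or more of the $\bar\rho(g_i)$ coincide, the relevant quotients $g_i^{-1}g_j$ lie in $K$, and the value of $c$ is controlled by the left-ordering of $\widetilde{G}_c$ restricted to $K'$. I expect the main obstacle to lie in this compatibility check: one must carefully relate the positive cone of $\widetilde{G}_c$ coming from the cocycle $f_c$ to the dynamics on $\RR$, and confirm that the resulting prescription recovers exactly the lexicographic circular ordering defined in Section~\ref{lex-orderings}.
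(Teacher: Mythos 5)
Your proposal is correct and follows essentially the same route as the paper: both pass to the dynamical realization and use amenability (via an invariant probability measure) to make the rotation number a genuine homomorphism to $S^1$, then take $K$ to be its kernel. The only cosmetic difference is that you obtain left-orderability of $K$ by identifying it with $\rho^{-1}(0) \subset \widetilde{G}_c$, whereas the paper invokes Ghys's observation that the kernel acts on $S^1$ with a global fixed point and is therefore left-orderable; the final compatibility check you flag is likewise left implicit in the paper's proof.
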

\begin{proof}
Let $\rho_c: G  \rightarrow \mathrm{Homeo_+}(S^1)$ denote the dynamical realization of the circular ordering, and consider $\mathrm{rot} \circ \rho_c$, where $\mathrm{rot}$ is the rotation number.  Since $G$ is amenable, $\mathrm{rot} : \rho_c(G) \rightarrow S^1$ is a homomorphism; it is easy to see that this homomorphism must be circular-order preserving by construction of the dynamical realization.   Moreover, the kernel $K$ of the rotation number homomorphism acts on $S^1$ with a global fixed point and is therefore left-orderable \cite[Proposition 6.17]{Ghys01}.
\end{proof}

\begin{proposition}
\label{daves reduction}
Suppose that $G$ is a countable amenable group.  Then $G \times \ZZ/n\ZZ$ is circularly-orderable if and only if there exists a homomorphism $\phi : G \rightarrow C$ onto a cyclic group $C$ containing a subgroup $H$ of order $n$ such that $\phi^{-1}(H)$ is left-orderable.
\end{proposition}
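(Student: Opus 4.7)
The plan is to prove both directions using the lexicographic construction from Section \ref{lex-orderings}, together with Lemma \ref{form of circular orders} and Lemma \ref{kernel}.

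For the $(\Leftarrow)$ direction, suppose $\phi : G \to C$ is a surjection onto a cyclic group $C$ with a subgroup $H$ of order $n$ and $\phi^{-1}(H)$ left-orderable. I would fix an isomorphism $\iota : \ZZ/n\ZZ \to H$ and, using that $C$ is abelian, define $\psi : G \times \ZZ/n\ZZ \to C$ by $\psi(g, k) = \phi(g)\iota(k)^{-1}$. A short check shows $\psi$ is a surjective homomorphism whose kernel $\{(g, k) : \phi(g) = \iota(k)\}$ projects isomorphically onto $\phi^{-1}(H)$ via $(g, k) \mapsto g$, since $k = \iota^{-1}(\phi(g))$ is determined by $g$ whenever $\phi(g) \in H$. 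Hence $\ker\psi \cong \phi^{-1}(H)$ is left-orderable, and since $C$ is cyclic (hence circularly-orderable as a subgroup of $S^1$), the lexicographic construction applied to $1 \to \ker \psi \to G \times \ZZ/n\ZZ \to C \to 1$ yields a circular ordering on $G \times \ZZ/n\ZZ$.

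For the $(\Rightarrow)$ direction, suppose $G \times \ZZ/n\ZZ$ is circularly-orderable. Since it is countable amenable, Lemma \ref{form of circular orders} produces a short exact sequence $1 \to K \to G \times \ZZ/n\ZZ \to C' \to 1$ via a quotient map $\rho$, with $K$ left-orderable and $C' \leq S^1$. Writing $\rho_1 := \rho|_G$ and $\rho_2 := \rho|_{\ZZ/n\ZZ}$, the left-orderability (hence torsion-freeness) of $K$ forces $\rho_2$ to be injective, so $H := \rho_2(\ZZ/n\ZZ) \leq C'$ has order $n$. Lemma \ref{kernel} identifies $K$ with $\rho_1^{-1}(H)$, so $\rho_1^{-1}(H)$ is left-orderable.

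The main obstacle is to extract from $\rho_1$ the required surjection $\phi : G \to C$ onto a cyclic group: while $C'$ is an abelian subgroup of $S^1$, it need not itself be cyclic, and $\rho_1(G)$ need not contain $H$. I would address this by post-composing $\rho_1$ with a suitable quotient map $\pi : C' \to C$ chosen so that $C$ is cyclic, $\pi|_H$ is injective, and $\pi(\rho_1(G)) \supseteq \pi(H)$. The surjectivity of $\rho$ gives $C' = \rho_1(G) + H$, which constrains the structure of $C'$ enough to make such a $\pi$ available. The final verification is that $\phi^{-1}(\pi(H))$ remains left-orderable; this should follow from the left-orderability of $\rho_1^{-1}(H)$ combined with the fact that any extension of a left-orderable group by a torsion-free abelian group is itself left-orderable.
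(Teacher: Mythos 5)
Your backward direction is the paper's argument verbatim in multiplicative notation: the paper also defines $\psi(g,a)=\phi(g)+\iota(a)$, identifies $\ker\psi\cong\phi^{-1}(H)$ (via Lemma \ref{kernel} rather than your direct check), and applies the lexicographic construction. Your forward direction likewise coincides with the paper's up to and including the conclusion that $\rho_2$ is injective and $K\cong\rho_1^{-1}(H)$ is left-orderable. At that point the paper simply stops: it takes $C$ to be the possibly non-cyclic subgroup of $S^1$ supplied by Lemma \ref{form of circular orders} and $\phi=\psi|_G$, without addressing cyclicity of $C$ or surjectivity of $\phi$. So the ``main obstacle'' you flag is real and is not handled by the printed proof either; you have read the statement more carefully than the paper does.

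Your proposed repair, however, does not close the gap, and no repair can, because the forward direction read literally is false. First, the existence of a quotient $\pi:C'\to C$ with $C$ finite cyclic, $\pi|_H$ injective and $\pi(\rho_1(G))\supseteq\pi(H)$ is asserted rather than proved, and your final step breaks down independently: $\phi^{-1}(\pi(H))/\rho_1^{-1}(H)$ embeds in $\ker\pi$, but the torsion subgroup of $C'$ is a subgroup of $\QQ/\ZZ$ that may be infinite (e.g.\ when $\rho_1(G)\supseteq\ZZ[1/2]/\ZZ$), in which case every finite cyclic quotient of $C'$ has torsion in its kernel, so $\ker\pi$ is not torsion-free. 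Second, and decisively, a perfect group admits no surjection onto any nontrivial cyclic group, and there exist perfect, countable, amenable, left-orderable groups --- e.g.\ McLain's group $M(\QQ,\QQ)$, which is torsion-free and locally nilpotent (hence left-orderable and amenable) and perfect since $[1+e_{\lambda\mu},1+ae_{\mu\nu}]=1+ae_{\lambda\nu}$ and $\QQ$ is dense. For such $G$ the product $G\times\ZZ/n\ZZ$ is circularly-orderable for every $n$, yet no $\phi$ as in the statement exists. What your argument (and the paper's) actually establishes is the weaker conclusion with $C$ an arbitrary subgroup of $S^1$, $H\le C$ of order $n$, and no surjectivity imposed on $\phi$; that weaker statement is all that is needed downstream in Proposition \ref{prop: Adamsreduction}, where $G/G'$ is finite and every finite subgroup of $S^1$ is automatically cyclic. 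I would prove and use that weaker form rather than attempt to manufacture the cyclic, surjective one.
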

\begin{proof} 
If $G \times \ZZ/n\ZZ$ is circularly-orderable, then by Lemma \ref{form of circular orders} there exists a subgroup $C$ of $S^1$ and a homomorphism $\psi: G \times \ZZ/n\ZZ \rightarrow C$ with left-orderable kernel $K$.  Let $\phi = \psi|_G$.  Since $K$ is left-orderable the restriction of $\psi$ to $\ZZ/n\ZZ$ must be injective (otherwise $K$ would contain torsion), and so by Lemma \ref{kernel} $K = \ker \psi \cong \phi^{-1}(\psi(\ZZ/n\ZZ))$.  

On the other hand, suppose $G$ admits a homomorphism $\phi$ as in the statement of the theorem and let $\iota : \ZZ/n\ZZ \rightarrow C$ be an embedding of $\ZZ/n\ZZ$.  Define $\psi : G \times \ZZ/n\ZZ \rightarrow C$ by $\psi(g,a) = \phi(g) + \iota(a)$.  Then $\psi$ is injective on restriction to $\ZZ/n\ZZ$, so by Lemma \ref{kernel} we have $\ker \psi \cong \phi^{-1}(\iota(\ZZ/n\ZZ))$, which is assumed to be left-orderable.  Thus $G \times \ZZ/n\ZZ$ is circularly-orderable by a standard lexicographic construction.
 \end{proof}

\begin{proposition}
\label{prop: Adamsreduction}
Suppose that $G$ is a finitely generated amenable circularly-orderable group, and that $G/G'$ is finite.  If $e$ denotes the exponent of $G/G'$, then $e \in \Ob(G)$.
\end{proposition}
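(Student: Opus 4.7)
The plan is to argue by contradiction. Suppose $G \times \ZZ/e\ZZ$ is circularly-orderable. Then Proposition \ref{daves reduction} supplies a surjective homomorphism $\phi : G \to C$ onto a cyclic group $C$, together with a subgroup $H$ of $C$ of order $e$ such that $\phi^{-1}(H)$ is left-orderable. The goal is to show that this forces $G$ itself to be left-orderable, and then to derive a contradiction from the hypothesis that $G/G'$ is finite.

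The first step uses the structure of $G/G'$. Since $C$ is abelian, $\phi$ factors through the abelianization, and because $G/G'$ has exponent $e$, every element of $\phi(G) = C$ has order dividing $e$. This rules out $C$ being infinite cyclic (as $e \geq 2$; note that the case of trivial $G/G'$ need not be considered, since a finitely generated amenable circularly-orderable group with trivial abelianization must itself be trivial by Lemma \ref{form of circular orders} together with Morris's theorem below). Hence $C$ is finite, and in a finite cyclic group the condition that every element has order dividing $e$ forces $|C|$ to divide $e$. Combined with $e \mid |C|$ coming from the subgroup $H$, this yields $|C| = e$, so $C = H$ and therefore $\phi^{-1}(H) = G$. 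In particular, $G$ itself is left-orderable.

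To finish, I would invoke the theorem of Dave Witte Morris that every amenable left-orderable group is locally indicable. Since $G$ is finitely generated, amenable, left-orderable, and nontrivial, local indicability produces a surjection of $G$ onto $\ZZ$, making $G/G'$ infinite and contradicting the hypothesis that $G/G'$ is finite.

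The main obstacle is having the right external input on hand: once Proposition \ref{daves reduction} is invoked, the argument reduces quickly to showing $G$ is left-orderable, and the only nontrivial step is the appeal to Morris's theorem to rule out a finitely generated amenable left-orderable group having finite abelianization.
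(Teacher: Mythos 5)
Your proposal is correct and follows essentially the same route as the paper: invoke Proposition \ref{daves reduction}, use the exponent hypothesis to force $|C|=e$ and hence $\phi^{-1}(H)=G$, then contradict the finiteness of $G/G'$ via the fact (Morris) that a finitely generated amenable left-orderable group has infinite abelianization. You spell out a couple of edge cases (infinite $C$, trivial abelianization) that the paper leaves implicit, but the argument is the same.
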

\begin{proof}
Suppose that  $\phi: G \rightarrow C$ is a homomorphism onto a cyclic group containing a subgroup of order $e$ with left-orderable preimage, as in Proposition \ref{daves reduction}.  Then as $|C|$ must divide $e$, we conclude $|C| =e$.  But then $\phi^{-1}(C)=G$ is not left-orderable, since every finitely generated amenable left-orderable group has infinite abelianization.
\end{proof}

\begin{proposition} \label{obstruction_spectrum_p}
Let $p$ be a prime with $p\ge 2$.  There exists a torsion-free circularly-orderable group $G$ such that $\Ob(G) = p\NN$. 

\end{proposition}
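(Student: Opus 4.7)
The plan is to exhibit, for each prime $p \ge 2$, a specific torsion-free, finitely generated, amenable group $G$ whose abelianization is cyclic of order $p$ and whose commutator subgroup $[G,G]$ is left-orderable. A natural place to look is among Bieberbach groups with cyclic holonomy of order $p$: here $[G,G]$ sits inside the translation lattice $\ZZ^n$, hence is free abelian and therefore left-orderable, and with a careful choice of extension class the abelianization can be arranged to be $\ZZ/p\ZZ$. (For $p$ odd one can try the fundamental group of a specific flat manifold with holonomy $\ZZ/p\ZZ$ acting nontrivially on $\ZZ[\zeta_p]$ together with an extra trivially-acted lattice direction that is killed by the cocycle; the precise choice of the extension class is what ensures $G^{ab} \cong \ZZ/p\ZZ$ rather than something larger.)

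Granted such a $G$, circular-orderability of $G$ itself follows immediately from the lexicographic construction of Section \ref{lex-orderings} applied to the short exact sequence
\[
1 \lra [G,G] \lra G \lra \ZZ/p\ZZ \lra 1,
\]
whose kernel is left-ordered and whose quotient is cyclic (hence circularly-ordered as a subgroup of $S^1$). Since $G$ is finitely generated amenable with $G/[G,G]$ finite of exponent $p$, Proposition \ref{prop: Adamsreduction} gives $p \in \Ob(G)$; because $\Ob(G)$ is closed under taking multiples, this yields $p\NN \subseteq \Ob(G)$.

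For the reverse inclusion $\Ob(G) \subseteq p\NN$, I would show that $G \times \ZZ/n\ZZ$ is circularly-orderable for every $n > 1$ with $\gcd(n,p) = 1$. The short exact sequence
\[
1 \lra [G,G] \lra G \times \ZZ/n\ZZ \lra (G/[G,G]) \times \ZZ/n\ZZ \lra 1
\]
has left-orderable kernel $[G,G]$, and the Chinese Remainder Theorem identifies the quotient $\ZZ/p\ZZ \times \ZZ/n\ZZ$ with $\ZZ/pn\ZZ$, a cyclic (hence circularly-ordered) subgroup of $S^1$. Applying the lexicographic construction of Section \ref{lex-orderings} again produces a circular ordering on $G \times \ZZ/n\ZZ$, so $n \notin \Ob(G)$. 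Combined with the previous paragraph, $\Ob(G) = p\NN$ exactly.

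The main obstacle in this plan is the first step: writing down, for each prime $p$, an explicit torsion-free finitely generated amenable group with abelianization precisely $\ZZ/p\ZZ$ and with left-orderable commutator subgroup. The amenability is crucial because it is what makes Proposition \ref{prop: Adamsreduction} available and thereby forces $p$ into $\Ob(G)$, while torsion-freeness rules off the shelf the obvious split semidirect products $\ZZ^{p-1} \rtimes \ZZ/p\ZZ$. One therefore has to work in the non-split setting (e.g.\ via the Bieberbach classification and a nontrivial class in $H^2(\ZZ/p\ZZ; \Lambda)$ for a suitable $\ZZ[\ZZ/p\ZZ]$-lattice $\Lambda$), and verify both that the resulting group is torsion-free and that the extension class shrinks the abelianization down to exactly $\ZZ/p\ZZ$.
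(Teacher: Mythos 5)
Your high-level strategy coincides with the paper's: produce a torsion-free, finitely generated, amenable, circularly-orderable $G$ with $G/G'$ finite of exponent $p$, deduce $p\NN \subseteq \Ob(G)$ from Proposition \ref{prop: Adamsreduction} together with closure of $\Ob(G)$ under multiples, and get the reverse inclusion by mapping $G \times \ZZ/n\ZZ$ onto a cyclic group with left-orderable kernel when $\gcd(n,p)=1$. The gap is in the step you yourself flag as ``the main obstacle'': the group you propose to build cannot exist, so the construction is not merely left incomplete --- the route you sketch for it is provably blocked. If $1 \to \Lambda \to G \to \ZZ/p\ZZ \to 1$ is an extension with $\Lambda \cong \ZZ^n$ (a Bieberbach group with holonomy $\ZZ/p\ZZ$), the extension class lives in $H^2(\ZZ/p\ZZ;\Lambda) \cong \Lambda^{\ZZ/p\ZZ}/N\Lambda$, with $N$ the norm map. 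If the fixed sublattice $\Lambda^{\ZZ/p\ZZ}$ were zero this cohomology group would vanish, the extension would split, and $G$ would contain $p$-torsion. Hence torsion-freeness forces $\Lambda^{\ZZ/p\ZZ} \neq 0$, so $b_1(G) = \operatorname{rank}(\Lambda_{\ZZ/p\ZZ}) = \operatorname{rank}(\Lambda^{\ZZ/p\ZZ}) \geq 1$ and $G/G'$ is infinite. (This is the easy direction of the Hiller--Sah theorem: a cyclic group of prime order is never the holonomy of a flat manifold with first Betti number zero; the smallest holonomy that works is $\ZZ/2\ZZ \times \ZZ/2\ZZ$, realized by the Promislow group in the paper's later example.) Concretely, in your $\ZZ[\zeta_p]\oplus\ZZ$ model the cocycle identifies the extra translation $\tau$ with $\alpha^p$ for a lift $\alpha$ of the holonomy generator, so $\alpha$ has infinite order in $G^{ab}$; the cocycle cannot ``kill'' the trivially-acted direction. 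With $G/G'$ infinite, Proposition \ref{prop: Adamsreduction} never applies and your argument produces no element of $\Ob(G)$ at all.

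This is precisely why the paper's construction is as elaborate as it is: the kernel of the map onto $\ZZ/p\ZZ$ must be non-abelian. The paper takes a solvable kernel $\ZZ[1/(p+1)]^p \rtimes \ZZ^{p-1}$ in which conjugation rescales the $\ZZ[1/(p+1)]$ factors, and it is this action that crushes the abelianization of $G$ down to a finite elementary abelian $p$-group; verifying torsion-freeness and the structure of $G/G'$ for that group is the bulk of their proof. A secondary point: since the paper's $G/G'$ need not be cyclic, the coprime case is handled by mapping the ambient group $B \times \ZZ/m\ZZ$ onto $\ZZ/p\ZZ \times \ZZ/m\ZZ \cong \ZZ/pm\ZZ$ rather than factoring through $G^{ab}$; your Chinese Remainder step would be fine if a suitable group with abelianization exactly $\ZZ/p\ZZ$ existed, but as written it inherits the gap above.
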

\begin{proof}
Let $H\cong \mathbb{Z}[1/(p+1)]^p$ denote the subgroup of the units group of 
$$\mathbb{C}[x_1^{\pm 1/(p+1)^j}, \ldots ,x_p^{\pm 1/(p+1)^j}\colon j\ge 0]$$ generated by $x_1^{1/(p+1)^j},\ldots , x_p^{1/(p+1)^j}$ with $j\ge 0$, and let $K$ denote the free abelian group of rank $p$ with generators $y_1,\ldots ,y_p$.  To help in describing the construction, we take $x_{p+i}=x_i$ and $y_{p+i}=y_i$ for $i=1,\ldots ,p$.
Then we form a semidirect product $H\rtimes K$ by declaring that 
$y_i x_j y_i^{-1} = x_j$ if $j\not\in \{i,i+1\}$ and $y_i x_i y_i^{-1} =x_i^{p+1}$ and $y_i x_{i+1}y_i^{-1}=x_{i+1}^{1/(p+1)}$ for $i=1,\ldots ,p$.  
Notice that the subgroup of $K$ generated by $y:=y_1y_2\cdots y_p$ commutes with each $x_i$ and hence we can form the semidirect product $N:=H\rtimes K/(y)$.  

Now we make an automorphism $f$ of $H\rtimes K$ by declaring 
$f(x_i) = x_{i+1}$ and $f(y_i) = y_{i+1}$. Then since the element $y$ is fixed by $f$, we see that $f$ induces an order-$p$ automorphism of $N$ and so we have the semidirect product $B:=N\rtimes \langle z | z^p=1\rangle$.

There is a surjective homomorphism $\phi: H\to \langle z | z^p=1\rangle$, which sends  
$x_i^{1/(p+1)^j}$ to $z$ for each $i,j$.  Then we take 
$$G = \{ h k z^i \mid h\in H, k\in K/(y), i\in \{0,1,\ldots, p-1\}\text{ and }\phi(h)=z^i\}.$$
By construction, $\phi(y_i h y_i^{-1}) = \phi(h)$ for $h\in H$ and $i=1,\ldots , p$ and $\phi(zhz^{-1})=\phi(h)$ for $h\in H$. Thus we see $G$ is closed under products and taking inverses and so it is a subgroup of $N\rtimes \langle z | z^p=1\rangle$.

We next claim that $G$ is torsion-free.  To see this, suppose that 
$hk z^i\in G$ is a torsion element.  Then as $(hkz^i)^p \in N$ and since $N$ is torsion-free, we see that $hkz^i$ must have order $p$ and $i\in \{1,\ldots ,p-1\}$.  By replacing $hkz^i$ by a suitable power, we may assume that $i=1$ and so $\phi(h)=z$.  Then there exists some $h'\in H$ and $k'\in K$ such that $(hk z)^p = h' k'$; moreover, a straightforward computation shows that $k' = k(zkz^{-1})(z^2 k z^{-2})\cdots (z^{p-1} k z^{-1+p})$.  Since $k$ is the image in $K/(y)$ of an element of the form $y_1^{b_1}\cdots y_p^{b_p}$, we see that
$k(zkz^{-1})(z^2 k z^{-2})\cdots (z^{p-1} k z^{-1+p}) = y_1^{B}\cdots y_p^{B}=y^B$, where $B=\sum b_i$.  In particular, $k'=1$ in $K/(y)$.  We also have that $h' = h \tau(h)\cdots \tau^{p-1}(h)$, where $\tau$ is the automorphism induced by conjugation by $y_1^{b_1}\cdots y_p^{b_p}z$.
This automorphism sends $x_i$ to $x_{i+1}^{(p+1)^{b_{i+1}-b_i}}$ and thus if $h=x_1^{a_1}\cdots x_p^{a_p}$, then 
$$h\tau(h)\cdots \tau^{p-1}(h)= \prod_{j=1}^p x_j^{a_j + a_{j-1}(p+1)^{b_j-b_{j-1}}+a_{j-2}(p+1)^{b_j-b_{j-2}} + \cdots + a_{j-p+1} (p+1)^{b_j - b_{j-p+1}}},$$ where $a_{i}=a_{i+p}, b_{i}=b_{i+p}$ for $i\le 0$.
Then we see that if $h'=1$ then we must have
$$a_j + a_{j-1}(p+1)^{b_j-b_{j-1}}+\cdots +a_{j-p+1} (p+1)^{b_j - b_{j-p+1}} =0$$ for every $j$.
Looking mod $p$, where we take $(p+1)^j=1$ mod $p$ for $j<0$, we see that 
$a_1+\cdots + a_p=0~(\bmod ~p)$, which gives that $\phi(h)=1$, a contradiction, since $\phi(h)=z$.

Note that solvable groups are amenable. Therefore by Propositions \ref{daves reduction} and \ref{prop: Adamsreduction}, to complete the proof we must show that $G$ is a finitely generated solvable circularly-orderable group and show that $G/G'$ is an elementary abelian $p$-group. We first show that $G$ is finitely generated. By construction, $x_i^p\in G$ for all $i$ and the relation $y_i x_i^p y_i^{-1} = x_i^{(p+1)p}$ holds.  Thus if we let $G_0$ denote the finitely generated subgroup of $B:=N\rtimes \langle z|z^p=1\rangle$ generated by $x_1^p,\ldots ,x_p^p, y_1,\ldots ,y_p$, then $G_0$ is normal in $B$ and by construction $B/G_0$ is a finite $p$-group.  In fact, it is a semidirect product 
$\left(\mathbb{Z}/p\mathbb{Z}\right)^p\rtimes \langle z\rangle$, where $z$ acts on $\left(\mathbb{Z}/p\mathbb{Z}\right)^p$ via cyclic permutation. Since $G/G_0$ embeds in $B/G_0$ we see that $G/G_0$ is a finite $p$-group and so $G$ is finitely generated since $G_0$ is finitely generated. Since $G$ is a subgroup of the solvable group $B$, we also have that $G$ is solvable. 

Next to show that $G/G'$ is an elementary abelian $p$-group, first note that $g_{i,j}:=x_i^{1/(p+1)^j} x_{i+1}^{-1/(p+1)^j}\in G$ for $i\in \{1,\ldots ,p\}$ and $j\in \mathbb{Z}$.  Then $$[g_{i,j}, y_{i+1}]=x_i^{1/(p+1)^j} x_{i+1}^{-1/(p+1)^j} y_{i+1} x_{i+1}^{1/(p+1)^j} x_i^{-1/(p+1)^j} y_{i+1}^{-1}\in G'.$$ 
A straightforward computation then yields that
$ x_{i+1}^{p/(p+1)^j}\in G'$. Next observe that $x_i z, x_{i+1}z \in G$ and their commutator is equal to
$$x_i z x_{i+1} z z^{-1} x_i^{-1} z^{-1} x_{i+1}^{-1} = x_i z x_{i+1} x_i^{-1} z^{-1} x_{i+1}^{-1} = x_i x_{i+1}^{-2} x_{i+2}.$$  More generally we have
$x_i^{(p+1)^j} x_{i+1}^{-2(p+1)^j} x_{i+2}^{(p+1)^j}\in G'$ for all $j\in \mathbb{Z}$.  We also have $[y_i, z]=y_i z y_i z^{-1} = y_i y_{i+1}^{-1}\in G'$, where we take $y_{p+1}=y_1$.  Since every element of $K$ can be reduced using the relations $y_i y_{i+1}^{-1}$, with $i=1,2,\ldots ,p$, and $y$, to obtain an element of the form $y_p^{\ell}$ with $0\le \ell<p$, we see $\left(K/(y) \right)/(K/(y)\cap G')$ is a cyclic group of order $p$ that is generated by the image of $y_p$.

Notice that $G$ is normal in $B$ and $G'$ is characteristic in $G$ and hence $G'$ is normal in $B$. If $hk z^i\in B$, with $h\in H$, $k\in K/(y)$, then from the above remark, modulo $G'$, $hkz^i$ is equal to $h y_p^{\ell} z^i$ for some $\ell\in \{0,1,\ldots ,p-1\}$.  Moreover, using the fact that $x_i^{(p+1)^j} x_{i+1}^{-2(p+1)^j} x_{i+2}^{(p+1)^j}\in G'$ for all $j\in \mathbb{Z}$ and $i=1,\ldots ,p-1$, we see that modulo $G'$ that we can take $h$ to be of the form $x_1^a x_2^b$ with $a,b\in \mathbb{Z}[1/(p+1)]$.  Since $x_1^{p/(p+1)^j}, x_2^{p/(p+1)^j}\in G'$ for all $j\in \mathbb{Z}$, we see that we can in fact take $a,b \in \{0,1,\ldots, p-1\}$ and so $B/G'$ is a homomorphic image of a semidirect product $$\left(\mathbb{Z}/p\mathbb{Z}\right)^3 \rtimes \langle z|z^p=1\rangle$$ (with the first two copies of $\mathbb{Z}/p\mathbb{Z}$ being generated by the images of $x_1$ and $x_2$ and the last copy being generated by the image of $y_p$), which is a nonabelian group of order $p^3$ in which the automorphism of $\left(\ZZ/p\ZZ\right)^3$ induced by conjugation by $z$ is given by 
$(1,0,0)\mapsto (0,1,0)$ and $(0,1,0)\mapsto (-1,2,0)$, and this group is a nonabelian group of order $p^4$ in which every element has order $p$.  Since $G/G'$ embeds in this group and is abelian and since $G$ is solvable, it is a non-trivial proper subgroup of this group and hence is an elementary abelian $p$-group of of order in $\{p,p^2,p^3\}$.

It now only remains to show that $G$ is circularly-orderable. Observe first that $H$ is left-orderable. Moreover, $K/(y)$ is left-orderable, as it is $\mathbb{Z}^{p-1}$, since the element $y$ corresponds to a unimodular row in $\mathbb{Z}^p$.  Thus $N$ is left-orderable, as it is a semidirect product of two left-orderable groups. This then gives that $B=N\rtimes \langle z|z^p=1\rangle$ is circularly-orderable as it is a semidirect product of a left-orderable group and a circularly-orderable group. Hence $G$ is circularly-orderable as it is a subgroup of $B$. Finally, in light of Proposition \ref{prop: Adamsreduction} we get that $G\times \ZZ/p\ZZ$ is not circularly-orderable.

Suppose now $\gcd(m,p) = 1$.   Notice that $G\times \ZZ/m\ZZ$ is a subgroup of $B\times \ZZ/m\ZZ$ and since $B\times \ZZ/m\ZZ$ fits into a short exact sequence
$$1\to H\rtimes K/(y)\to B\times \ZZ/m\ZZ\to \ZZ/p\ZZ\times \ZZ/m\ZZ \to 0,$$ we see that $B\times \ZZ/m\ZZ$ is circularly-orderable, as $H\rtimes K/(y)$ is left-orderable and $\ZZ/p\ZZ \times \ZZ/m\ZZ \cong \ZZ/pm\ZZ$ is circularly-orderable.  It follows that $G\times \ZZ/m\ZZ$ is circularly-orderable, and therefore $\Ob(G) = p\NN$.
\end{proof}

Next we show that the obstruction spectrum of the free product of circularly-orderable groups is equal to the union of the obstruction spectra of each of the component groups.  Note that it makes sense to talk about the obstruction spectrum of a free product of circularly-orderable groups since such a free product is circularly-orderable \cite{BS}.

\begin{lemma}\label{obstruction_free_products}
Let $\{G_\alpha \mid \alpha \in \mathcal A\}$ be a set of circularly-orderable groups.  Then $\Ob(*G_\alpha) = \bigcup_{\alpha \in \mathcal A} \Ob(G_\alpha)$.
\end{lemma}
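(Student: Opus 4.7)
The easy inclusion $\bigcup_{\alpha} \Ob(G_\alpha) \subset \Ob(*_\alpha G_\alpha)$ follows from the natural embedding $G_\alpha \times \ZZ/n\ZZ \hookrightarrow (*_\beta G_\beta) \times \ZZ/n\ZZ$ combined with the fact that circular-orderability passes to subgroups, so I concentrate on the reverse inclusion. Arguing contrapositively, I assume each $G_\alpha \times \ZZ/n\ZZ$ is circularly-orderable and aim to show $(*_\alpha G_\alpha) \times \ZZ/n\ZZ$ is circularly-orderable. By the local-to-global principle \cite[Lemma 2.14]{Clay}, it suffices to check finitely generated subgroups; any such subgroup sits inside $(G_{\alpha_1} * \cdots * G_{\alpha_k}) \times \ZZ/n\ZZ$ for a finite set of indices, so by induction I reduce to the two-factor case. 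A direct universal-property check then yields the identification
\[
(G_1 * G_2) \times \ZZ/n\ZZ \;\cong\; (G_1 \times \ZZ/n\ZZ) *_{\ZZ/n\ZZ} (G_2 \times \ZZ/n\ZZ),
\]
exhibiting our group as an amalgamated free product of circularly-orderable groups over the common central $\ZZ/n\ZZ$.

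To handle this amalgamation I pass to Zheleva lifts. Choose circular orderings $c_\alpha$ on $G_\alpha \times \ZZ/n\ZZ$, arranged---by post-composing one with an automorphism of $\ZZ/n\ZZ$---to restrict to the same circular ordering on the central $\ZZ/n\ZZ$, and form the left-ordered central extensions $\wt H_\alpha = \wt{(G_\alpha \times \ZZ/n\ZZ)}_{c_\alpha}$ from Section \ref{background}. The torsion-free-extension argument from the proof of Proposition \ref{divisible lift} (applied to $Q = \tfrac{1}{n}\ZZ$) shows that the preimage of $\ZZ/n\ZZ$ in each $\wt H_\alpha$ is isomorphic to $\ZZ$; aligning generators produces an amalgamated product $\wt L = \wt H_1 *_{\ZZ} \wt H_2$ which is a central $\ZZ$-extension of $(G_1 * G_2) \times \ZZ/n\ZZ$. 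A left ordering on $\wt L$ in which the central $\ZZ$ is cofinal would project, via the inverse of Zheleva's unwrapping, to a circular ordering on $(G_1 * G_2) \times \ZZ/n\ZZ$, completing the argument.

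The hard step is establishing left-orderability of $\wt L$ with its central $\ZZ$ cofinal. This is a Bludov--Glass-style amalgamation problem whose standard sufficient condition---relative convexity of the amalgamated subgroup in compatible orderings on both factors---translates here into $G_\alpha$ being left-orderable, strictly stronger than what is hypothesized. Circumventing this requires a genuinely dynamical argument: realize each $\wt H_\alpha$ faithfully on $\RR$ with its cofinal central generator acting as integer translation by $1$, and combine the two actions on $\RR$ so that $\wt L$ acts faithfully and integer translation remains cofinal. This dynamical combination, akin to a ping-pong construction with translation-equivariant fundamental domains, is where the bulk of the work lies.
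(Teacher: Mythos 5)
Your reduction is the same as the paper's: both arguments rest on the isomorphism $(*_\alpha G_\alpha)\times \ZZ/k\ZZ \cong *_{\ZZ/k\ZZ}(G_\alpha\times\ZZ/k\ZZ)$, and both use the trick of post-composing with an automorphism of $\ZZ/k\ZZ$ so that the chosen circular orderings on the factors $G_\alpha\times\ZZ/k\ZZ$ all restrict to a common circular ordering on the amalgamated $\ZZ/k\ZZ$. (The paper does not even need your reduction to finitely many factors and induction: it treats the whole amalgamated product over the index set $\mathcal A$ at once.)

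The genuine gap is the final step. Having reduced to the amalgamated product, the paper invokes \cite[Proposition 1.2]{CG}: a free product of circularly-ordered groups amalgamated over a cyclic subgroup on which the orderings agree is circularly-orderable. You instead set out to prove this amalgamation statement from scratch, by lifting to the central $\ZZ$-extensions, forming $\wt H_1 *_{\ZZ} \wt H_2$, and seeking a left ordering in which the central $\ZZ$ is cofinal. That reduction is structurally sound (the quotient of such an ordered group by its cofinal central $\ZZ$ is indeed circularly-orderable, and Proposition \ref{divisible lift} does give that the preimage of $\ZZ/n\ZZ$ is $\ZZ$), but you then correctly observe that the standard convexity-based amalgamation criteria do not apply here, and you leave the required ``dynamical combination'' entirely unexecuted, conceding that this ``is where the bulk of the work lies.'' That is precisely the content of the cited result, so as written your argument does not close: you must either carry out that construction in detail or cite the Clay--Ghaswala amalgamation theorem as the paper does.
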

\begin{proof}
Let $k \in \NN_{>1}$.  Consider the amalgamated free product $*_{\ZZ/k\ZZ} (G_\alpha \times \ZZ/k\ZZ)$ defined by the injective maps $\iota_\alpha:\ZZ/k\ZZ \to G_\alpha \times \ZZ/k\ZZ$ where $\iota_\alpha(t) = (\text{id},t_\alpha)$ for a choice of generator $t \in \ZZ/k\ZZ$.  The homomorphisms $\phi_\alpha:G_\alpha \times \ZZ/k\ZZ \to (*G_\alpha)\times \ZZ/k\ZZ$ given by $\phi_\alpha(g,t_\alpha^m) = (g,s^m)$ (where $s \in \ZZ/k\ZZ$ is a chosen generator) induce an isomorphism
\[
*_{\ZZ/k\ZZ} (G_\alpha \times \ZZ/k\ZZ)\cong(*G_\alpha) \times \ZZ/k\ZZ.
\]
It now suffices to show $*_{\ZZ/k\ZZ} (G_\alpha \times \ZZ/k\ZZ)$ is circularly-orderable if and only if $G_\alpha \times \ZZ/k\ZZ$ is circularly-orderable for all $\alpha$.

If $*_{\ZZ/k\ZZ} (G_\alpha \times \ZZ/k\ZZ)$ is circularly-orderable, it is clear that $G_\alpha \times \ZZ/k\ZZ$ is circularly-orderable for all $\alpha$.  Conversely, suppose $G_\alpha \times \ZZ/k\ZZ$ is circularly-orderable for all $\alpha$.  By applying an automorphism of $G_\alpha \times \ZZ/k\ZZ$ of the form $(g,t_\alpha^m) = (g,\varphi(t_\alpha^m))$, where $\varphi \in \operatorname{Aut}(\ZZ/k\ZZ)$, we may assume there is a circular ordering $c$ on $\ZZ/k\ZZ$ and circular orderings $c_\alpha$ on $G_\alpha \times \ZZ/k\ZZ$ such that $\iota_\alpha^*(c_\alpha) = c$.  Since $\ZZ/k\ZZ$ is cyclic, $*_{\ZZ/k\ZZ} (G_\alpha \times \ZZ/k\ZZ)$ is circularly-orderable by \cite[Proposition 1.2]{CG}, completing the proof.
\end{proof}

\begin{proposition}
For every $S \subset \Pi$, there exists a torsion-free circularly-orderable group $G$ such that $\Ob(G) = \bigcup_{p \in S}p\NN$.
\end{proposition}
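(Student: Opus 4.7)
The plan is to combine the two preceding results directly. Proposition \ref{obstruction_spectrum_p} gives, for each prime $p$, a torsion-free circularly-orderable group $G_p$ with $\Ob(G_p) = p\NN$, and Lemma \ref{obstruction_free_products} says that the obstruction spectrum of a free product of circularly-orderable groups is the union of the obstruction spectra of the factors. So given $S \subset \Pi$, I would form the free product
\[
G := \underset{p \in S}{*}\, G_p,
\]
with the convention that the empty free product is the trivial group (which is left-orderable, handling the case $S = \emptyset$ where we want $\Ob(G) = \emptyset$).

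Next I would verify the three required properties. First, $G$ is torsion-free because a free product of torsion-free groups is torsion-free (every nontrivial element is conjugate to a cyclically reduced word, and such words of length $\geq 2$ have infinite order, while length-one elements sit inside a torsion-free factor). Second, $G$ is circularly-orderable by the result of \cite{BS} cited just before Lemma \ref{obstruction_free_products}, since each $G_p$ is circularly-orderable. Third, applying Lemma \ref{obstruction_free_products} to the family $\{G_p\}_{p \in S}$,
\[
\Ob(G) = \bigcup_{p \in S} \Ob(G_p) = \bigcup_{p \in S} p\NN,
\]
which is exactly what we want.

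There is essentially no obstacle here: the substantive work has already been done in Proposition \ref{obstruction_spectrum_p} (building the prime-indexed examples) and Lemma \ref{obstruction_free_products} (the free-product computation). The only things to check are the three bookkeeping items above, and the degenerate case $S = \emptyset$.
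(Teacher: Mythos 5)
Your proposal is correct and matches the paper's proof, which is exactly the one-line combination of Proposition \ref{obstruction_spectrum_p} and Lemma \ref{obstruction_free_products} applied to the free product $*_{p \in S} G_p$. The extra details you supply (torsion-freeness of free products, the degenerate case $S = \emptyset$) are sound bookkeeping that the paper leaves implicit.
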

\begin{proof}
The result follows by combining Proposition \ref{obstruction_spectrum_p} with Lemma \ref{obstruction_free_products}.
\end{proof}

We end this section with an example of a torsion-free group $G$ such that $\Ob(G) \neq \bigcup_{p \in S} p \NN$ for any set of primes $S$.  Thus the subsets of $\NN$ that arise as $\Ob(G)$ for some torsion-free group $G$ are more varied in structure than the sets which arise as $\Ob_T(G)$ for some group $G$.

\begin{example}
Consider the Promislow group $G:= \langle a, b \mid ab^{2}a^{-1} = b^{-2}, ba^2b^{-1}=a^{-2} \rangle$, also known as the fundamental group of the Hantzsche-Wendt manifold.  The group $G$ is torsion-free and not left-orderable, so $\Ob(G)$ is nonempty (\cite{Promislow88}, see also \cite[Example 1.59]{CR16}).  

The group $G$ is circularly-orderable.  There is a map $\phi : G \rightarrow \ZZ/2\ZZ$ given by $\phi(a) =1, \phi(b)=0$ whose kernel is the subgroup $\langle b, a^2, (ab)^2\rangle$. One can verify that $\langle b, a^2, (ab)^2\rangle \cong \ZZ^2 \ltimes \ZZ$, where $\ZZ^2$ is generated by $a^2, (ab)^2$ and $\ZZ$ is generated by $b$.  In particular, the kernel of $\phi$ is left-orderable so that $G$ is circularly-orderable by a lexicographic argument.  

Note that as a consequence of the circular ordering constructed in the previous paragraph, $n \notin \Ob(G)$ whenever $n \in \NN_{>1}$ is odd.  Indeed, when $n$ is odd $(\phi \times \id): G \times \ZZ/n\ZZ \rightarrow \ZZ/2\ZZ \times \ZZ/n\ZZ \cong \ZZ/2n\ZZ$ provides a map of $G \times \ZZ/n\ZZ$ onto a circularly-orderable group with kernel isomorphic to the left-orderable group $\ker(\phi)$.  Thus $G \times \ZZ/n\ZZ$ is circularly-orderable.

However, $2$ is also not in the obstruction spectrum of $G$.  To see this, note that the abelianization of $G$ is $(\ZZ/4\ZZ)^2$ with abelianization map $a \mapsto (1,0)$ and $b \mapsto(0,1)$.  Let $\psi : G \rightarrow \ZZ/4\ZZ$ denote the composition of the abelianization map with projection onto the first factor, so $\psi(a) = 1, \psi(b)=0$, and let $\iota:\ZZ/2\ZZ \rightarrow \ZZ/4\ZZ$ denote the obvious inclusion.  Define $\beta : G \times \ZZ/2\ZZ \rightarrow \ZZ/4\ZZ$ by $\beta(g, a) = \psi(g) + \iota(a)$.  We claim that $\ker(\beta) \cong \ker(\phi)$, which will show that $\ker(\beta)$ is left-orderable so that $G \times \ZZ/2\ZZ$ is circularly-orderable and $2 \notin \Ob(G)$.  The claim follows from checking that the map $\alpha : \ker(\beta) \rightarrow \ker(\phi)$ given by $\alpha(g, a) = g$ is an isomorphism.  Further, whenever $n$ is odd the map $(\beta \times \id): (G \times \ZZ/2\ZZ) \times \ZZ/n\ZZ \rightarrow \ZZ/4\ZZ \times \ZZ/n\ZZ$ yields a map onto a cyclic group whose kernel is isomorphic to $\ker(\beta)$.  Thus $2n \notin \Ob(G)$ whenever $n$ is odd, and we conclude $\Ob(G) \subset 4\NN$.

Last, note that the subgroup $\langle a^2, b^2, (ab)^2 \rangle$ is isomorphic to $\ZZ^3$ and is normal, with quotient $\ZZ/2\ZZ \times \ZZ/2\ZZ$.  In particular this tells us that $G$ is amenable.  Since $G/G' \cong (\ZZ/4\ZZ)^2$, it follows from Proposition \ref{prop: Adamsreduction} that $4 \in \Ob(G)$ and so $\Ob(G) = 4\NN$.
\end{example}

\subsection{Abelianization and the obstruction spectrum}

Suppose $G$ is a circularly-orderable but not left-orderable group. Since it is not left-orderable, there exists a finitely generated subgroup $H < G$ with no left-orderable quotients \cite{BH72}, and therefore the abelianization $H/H'$ is finite. The main result of this section is Corollary \ref{corollary-of-dave}, which is a connection between the exponent of $H/H'$ and the obstruction spectrum of $G$. Corollary \ref{corollary-of-dave} is a direct consequence of Theorem \ref{theorem-of-dave} whose statement and proof are due to Dave Morris.

\begin{theorem}[Morris]
\label{theorem-of-dave}
Let $H$ be a non-trivial, finitely generated, circularly-orderable group with no non-trivial left-orderable quotients. Let $e = \exp(H/H')$ be the exponent of the abelianization of $H$. Then for any integer $m \geq 2$, there exists a $k \geq 0$ such that $H \times \ZZ/(em^k)\ZZ$ is not circularly-orderable.
\end{theorem}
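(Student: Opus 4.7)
The approach is proof by contradiction. Suppose $H \times \ZZ/(em^k)\ZZ$ is circularly-orderable for every $k \geq 0$; the goal is to deduce that $H$ is itself left-orderable, contradicting the hypothesis that $H$ is a non-trivial group with no non-trivial left-orderable quotients.

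The first step is a direct-limit reduction. Set $Q = \ZZ[1/(em)]$, a dense subgroup of $\QQ$, so that $Q/\ZZ = \bigcup_k \langle \tfrac{1}{em^k}\rangle$ is locally cyclic. Every finitely generated subgroup of $H \times Q/\ZZ$ projects to a finite cyclic subgroup of $Q/\ZZ$ and is therefore contained in $H \times \ZZ/(em^k)\ZZ$ for some $k$, which is circularly-orderable by assumption. The local nature of circular-orderability \cite[Lemma 2.14]{Clay} then shows that $G := H \times Q/\ZZ$ itself is circularly-orderable. Fix a circular ordering $c$ on $G$ and pass to its left-orderable central extension $\wt{G}_c$. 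The preimage $R$ of $\{id\}\times Q/\ZZ$ is a torsion-free $\ZZ$-central extension of $Q/\ZZ$, so Proposition \ref{divisible lift} gives $R \cong Q$, and a short computation using $\operatorname{Hom}(Q,\ZZ)=0$ upgrades the normality of $R$ in $\wt{G}_c$ to centrality.

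I would then aim to show that $R$ is relatively convex in $\wt{G}_c$, since this will give that $H \cong \wt{G}_c/R$ is left-orderable. Let $K$ be the intersection of all relatively convex subgroups of $\wt{G}_c$ containing $R$; as in the proof of Theorem \ref{tararin's theorem}, $K$ is relatively convex, and the normality of $R$ makes $K$ normal as well. Then $\wt{G}_c/K$ is a left-orderable quotient of $\wt{G}_c/R = H$ and must therefore be trivial, forcing $K = \wt{G}_c$. Applying Lemma \ref{normal-infinitesimals} to $R \subset K = \wt{G}_c$ (valid because $Q$ is a dense subgroup of $\QQ$) yields a normal subgroup $M \triangleleft \wt{G}_c$ with $M \cap R = \{id\}$ and a left ordering of $\wt{G}_c$ under which $\wt{Q} := \wt{G}_c/M$ is isomorphic to a subgroup of $\QQ$; under the quotient $\pi \colon \wt{G}_c \to \wt{Q}$, the image $\pi(R)$ is isomorphic to $Q$ and sits inside $\wt{Q}$. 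The exponent hypothesis enters here: $\wt{Q}$ is abelian, so $[\wt{G}_c,\wt{G}_c] \subset M$, and since every class in $H/H'$ has order dividing $e$, every $g \in \wt{G}_c$ satisfies $g^e \in [\wt{G}_c,\wt{G}_c]\,R \subset MR$, whence $e\,\wt{Q} \subset \pi(R)$. The arithmetic identity $\tfrac{1}{e}\,\ZZ[1/(em)] = \ZZ[1/(em)]$ then forces $\wt{Q} = \pi(R)$, so $\wt{G}_c = RM$. Combining $R \cap M = \{id\}$ with the centrality of $R$ gives $\wt{G}_c \cong R \times M$, so $R$ is relatively convex and $H \cong M$ is left-orderable, the desired contradiction.

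The main obstacle is this step that stands in for Tararin's theorem. Theorem \ref{tararin's theorem} as stated applies only when $R \cong \QQ$, a case in which $\pi(R)$ is automatically all of $\wt{G}_c/M$ because $\QQ$ has no proper torsion-free rank-one overgroup inside $\QQ$. For us $R \cong Q = \ZZ[1/(em)]$ is only dense in $\QQ$, and two supplementary inputs are needed: the no-non-trivial-left-orderable-quotient hypothesis on $H$ (to conclude $K = \wt{G}_c$) and the exponent hypothesis (to pin down $\wt{G}_c/M$ as $\pi(R)$ rather than some strictly larger dense subgroup of $\QQ$). These two hypotheses together play the role that divisibility plays in the original Tararin argument.
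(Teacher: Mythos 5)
Your architecture matches the paper's (reduce to circular-orderability of $H\times Q/\ZZ$ for a suitable $Q\le\QQ$, pass to the left-orderable central extension, run a Tararin-type argument with the minimal relatively convex subgroup $K$ and the infinitesimal subgroup $M$ from Lemma \ref{normal-infinitesimals}, and split off $R$), but there is a genuine gap in your choice of $Q$, and it cannot be patched without also changing your final arithmetic step. For $Q=\ZZ[1/(em)]$ one has $Q/\ZZ=\bigcup_k\langle 1/(em)^k\rangle$, \emph{not} $\bigcup_k\langle 1/(em^k)\rangle$: the finitely generated subgroups of $Q/\ZZ$ are cyclic of order dividing $(em)^k=e^km^k$, and these need not embed into any $\ZZ/(em^j)\ZZ$ (take $e=6$, $m=5$: $\ZZ/900\ZZ$ does not embed in $\ZZ/(6\cdot 5^j)\ZZ$ since $900\nmid 6\cdot5^j$). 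So your local-to-global reduction either fails outright, or---if you strengthen the hypothesis to circular-orderability of all $H\times\ZZ/(em)^k\ZZ$---your contradiction only shows that some $H\times\ZZ/(e^km^k)\ZZ$ is not circularly-orderable, which is strictly weaker than the stated theorem; the content of the statement is precisely that the power of $e$ stays equal to one.

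The correct choice is $Q=\tfrac1e\ZZ\left[\tfrac1m\right]$, for which $Q/\ZZ=\bigcup_k\langle 1/(em^k)\rangle$ and the reduction via \cite[Lemma 2.14]{Clay} is valid. But then the identity you rely on at the end, $\tfrac1eQ=Q$, is false unless every prime dividing $e$ divides $m$: from $e\wt Q\subset\pi(R)$ you only get $\wt Q\subset\tfrac1e\pi(R)$, which can be a strictly larger subgroup of $\QQ$ than $\pi(R)$ (e.g.\ $\tfrac1{e^2}\ZZ[\tfrac1m]\supsetneq\tfrac1e\ZZ[\tfrac1m]$), so you cannot conclude $\wt{G}_c=RM$. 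The repair, which is what the paper does, is to argue in $\QQ/\ZZ$ rather than in $\QQ$: writing $\rho:\wt{G}_c\to G=H\times Q/\ZZ$ for the central extension, the quotient $G/\rho(M)$ embeds in $\QQ/\ZZ$ and equals the product of the images of $H$ and of $Q/\ZZ$; the image of $H$ is a finite subgroup of exponent dividing $e$, every finite subgroup of $\QQ/\ZZ$ is cyclic, hence the image of $H$ lies in the unique cyclic subgroup of order $e$, and that subgroup is contained in the image of $Q/\ZZ$ precisely because $\tfrac1e\in Q$. This forces the image of $Q/\ZZ$ to be all of $G/\rho(M)$, hence $\pi(R)=\wt Q$, after which your splitting $\wt{G}_c\cong R\times M$ and the contradiction go through. (A minor further point: normality of $R$ does not become centrality via $\operatorname{Hom}(Q,\ZZ)=0$; the conjugation action is by automorphisms of a subgroup of $\QQ$, i.e.\ by multiplication by rational units, and one kills it either by noting that it fixes the central copy of $\ZZ$ pointwise or by the cofinality argument of Lemma \ref{cofinal-central-subgroup}.)
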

\begin{proof}
Let $Q = \frac{1}{e}\ZZ\left[\frac 1m\right]$. The first step is to show $H \times Q/\ZZ$ is not circularly-orderable. Suppose towards a contradiction, that $H^* = H \times Q/\ZZ$ is circularly-orderable. Fix some circular ordering on $H^*$ and consider the central extension
\[
1 \lra \ZZ \overset{\iota}{\lra} \wt{H^*} \overset{\rho}{\lra} H^* \lra 1
\]
arising from the given circular ordering as in Section \ref{background}. In particular, $\wt{H^*}$ is left-orderable. Let $R = \rho^{-1}(Q/\ZZ)$, and let $K$ be the intersection of all relatively convex subgroups containing $R$. Since $R$ is normal, we have that $K$ is normal. Then $\wt{H^*}/K \cong H^*/\rho(K) \cong H/\pi_1\rho(K)$ where $\pi_1: H \times Q/\ZZ \to H$ is the projection map. Therefore $\wt{H^*}/K$ is a left-orderable group isomorphic to a quotient of $H$, which implies $\wt{H^*} = K$.

Note that $R \cong Q$ by Proposition \ref{divisible lift}, so $R$ is isomorphic to a dense subgroup of $\QQ$. Therefore by Lemma \ref{normal-infinitesimals}, there exists a normal subgroup $M$ such that $M \cap R = \{id\}$ and $\wt{H^*}/M$ is isomorphic to a subgroup of $\QQ$. Let $\tilde \pi:\wt{H^*} \to \wt{H^*}/M$ be the quotient map. Since $M \cap R = \{id\}$, $R$ maps injectively into $\wt{H^*}/M$ under $\tilde \pi$.

Let $\pi:H^* \to H^*/\rho(M)$ be the quotient map. Then $\rho(R) = Q/\ZZ$ maps injectively into $H^*/\rho(M)$ under $\pi$. Note that since $\wt{H^*}/M$ is isomorphic to a subgroup of $\QQ$, $H^*/\rho(M)$ is isomorphic to a subgroup of $\QQ/\ZZ$. Furthermore, since $\QQ/\ZZ$ is abelian and the exponent of $H/H'$ is $e$, $\pi(H)$ must be contained in the unique subgroup of $H^*/\rho(M)$ of order $e$. By the construction of $Q$, $Q/\ZZ$ contains a subgroup of order $e$, and therefore $\pi(H) \subset \pi(Q/\ZZ)$. Therefore $\pi|_{Q/\ZZ}:Q/\ZZ \to H^*/\rho(M)$ is an isomorphism. It follows that $\tilde\pi|_R:R \to \wt{H^*}/M$ is an isomorphism. By Lemma \ref{cofinal-central-subgroup}, $R$ is central in $\wt{H^*}$ and therefore $\wt{H^*} \cong R \times M$. Since $\wt{H^*}$ is smallest relatively convex subgroup containing $R$, we conclude that $M = \{id\}$. However, this implies $H$ is trivial, a contradiction.

We now have that $H \times Q/\ZZ$ is not circularly-orderable. Therefore there exists a finitely generated subgroup of $H \times Q/\ZZ$ that is not circularly-orderable \cite[Lemma 2.14]{Clay}. Every finitely generated subgroup of $H \times Q/\ZZ$ is contained in a subgroup of the form $H \times T$, where $T$ is a non-trivial finitely generated subgroup of $Q/\ZZ$. However, all finitely generated subgroups of $Q/\ZZ$ are finite cyclic groups. By the construction of $Q$, for every finite cyclic subgroup $S \subset Q/\ZZ$ there exists a $k$ such that $S$ is contained in the unique cyclic subgroup of $Q/\ZZ$ of order $em^k$. Therefore there exists a $k \geq 0$ such that $H \times \ZZ/(em^k)\ZZ$ is not circularly-orderable.
\end{proof}

Now given an arbitrary non-left orderable group $G$, there exists a finitely generated non-trivial subgroup $H$ that has no non-trivial left-orderable quotients \cite{BH72}. Therefore we can apply Theorem \ref{theorem-of-dave} to arbitrary groups to obtain the next corollary.

\begin{corollary}
\label{corollary-of-dave}
Let $G$ be a circularly-orderable but not left-orderable group, and let $H$ be a non-trivial finitely generated subgroup with no non-trivial left-orderable quotients. Let $e = \exp(H/H')$ be the exponent of the abelianization of $H$. Then for any integer $m \geq 2$, there exists a $k \geq 0$ such that $em^k \in \Ob(G)$.
\end{corollary}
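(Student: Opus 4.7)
The plan is to extract the corollary as a direct consequence of Theorem \ref{theorem-of-dave} combined with the Burns--Hale theorem and the fact that circular-orderability passes to subgroups. The corollary essentially just upgrades Morris's theorem from $H$ to $G$, so no genuinely new machinery is required; the work is in assembling the right pieces.

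First I would note that the subgroup $H$ postulated in the statement really does exist: since $G$ is not left-orderable, the Burns--Hale theorem \cite{BH72} produces a non-trivial, finitely generated subgroup $H \leq G$ admitting no non-trivial left-orderable quotients. Moreover, the abelianization $H/H'$ is a finitely generated abelian group which itself has no non-trivial left-orderable quotients; since the only such finitely generated abelian group is a finite one, $H/H'$ is finite and so $e = \exp(H/H')$ is a well-defined positive integer. The group $H$ is circularly-orderable because it is a subgroup of the circularly-orderable group $G$ (circular orderings restrict to circular orderings on subgroups).

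Next, given any integer $m \geq 2$, I would invoke Theorem \ref{theorem-of-dave} applied to this $H$ to obtain an integer $k \geq 0$ for which the group $H \times \ZZ/(em^k)\ZZ$ fails to be circularly-orderable. The final step is to transfer this failure from $H$ to $G$: the inclusion $H \hookrightarrow G$ induces an injection
\[
H \times \ZZ/(em^k)\ZZ \hookrightarrow G \times \ZZ/(em^k)\ZZ,
\]
and since circular-orderability is inherited by subgroups, the ambient group $G \times \ZZ/(em^k)\ZZ$ cannot be circularly-orderable either. By the definition of the obstruction spectrum, this gives $em^k \in \Ob(G)$, as required.

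There is no real obstacle here beyond verifying these bookkeeping points. If any step deserves a second look, it is the observation that $e$ is finite, which I would justify via the standard remark that a finitely generated abelian group with no non-trivial torsion-free quotients is finite --- but this is immediate from the structure theorem and the fact that $\ZZ$ is left-orderable. Everything else is an application of Theorem \ref{theorem-of-dave} and the subgroup-closure of circular-orderability.
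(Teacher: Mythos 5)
Your proposal is correct and follows essentially the same route the paper takes: existence and finiteness considerations for $H$ via Burns--Hale, restriction of the circular ordering of $G$ to $H$, an application of Theorem \ref{theorem-of-dave} to $H$, and then transferring non-circular-orderability from $H \times \ZZ/(em^k)\ZZ$ to $G \times \ZZ/(em^k)\ZZ$ using closure of circular-orderability under passing to subgroups. Nothing is missing.
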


\begin{example}
Consider the mapping class group $G = \operatorname{Mod}(\Sigma_{g,1})$ of an orientable genus $g\geq 3$ surface with one puncture (see \cite{FM} for an introduction to mapping class groups), which is circularly-orderable but not left-orderable since it contains torsion. Furthermore, since $G$ is generated by torsion elements \cite{Korkmaz}, it has no non-trivial left-orderable quotients. By a result of Harer \cite{Harer}, $G/G' = \{id\}$. Therefore by Theorem \ref{theorem-of-dave}, for every integer $m \geq 2$, there exists a $k_m \geq 1$ such that $m^{k_m} \in \Ob(G)$.

However, the maximum order of an element in $\operatorname{Mod}(\Sigma_{g,1})$ is bounded above. Indeed, if we combine the $4g+2$ theorem \cite{Wiman} with the Nielsen realization theorem \cite{Kerckhoff}, we obtain that the maximum order of an element in $\operatorname{Mod}(\Sigma_g)$ (where $\Sigma_g$ is a closed surface of genus $g$) is $4g+2$ (see \cite[Section 7.2]{FM} for a self-contained proof of this fact). Since $\pi_1(\Sigma_g)$ is torsion free, we can use the Birman exact sequence \cite{Birman}
\[
1 \lra \pi_1(\Sigma_g) \lra \operatorname{Mod}(\Sigma_{g,1}) \lra \operatorname{Mod}(\Sigma_g) \lra 1
\]
to conclude that the maximum order of an element in $\operatorname{Mod}(\Sigma_{g,1})$ is also $4g+2$.

Therefore there are elements in the obstruction spectrum of $G$ that do not detect torsion. More formally, the torsion part of the obstruction spectrum $\Ob_T(G)$ is a proper subset of the obstruction spectrum $\Ob(G)$.
\end{example}

The results in Section \ref{obstruction section} give some information as to the possible structure of obstruction spectra. However, it is not a complete picture, and we leave the reader with the natural question:

\begin{question}
Can every set of the form $\bigcup_{m \in S}m\NN$ where $S \subset \NN_{>1}$ arise as $\Ob(G)$ for some group $G$?
\end{question}

\bibliographystyle{plain}

\bibliography{loandcirc}

\end{document}